\newcommand{\dx}{h}
\newcommand{\dv}{h}
\newcommand{\dt}{\tau}
\newcommand{\order}{\ell}
\newcommand{\e}{\textnormal{e}}
\title{Convergence analysis of a discontinuous Galerkin/Strang splitting approximation for the Vlasov--Poisson equations}
\newcommand*\samethanks[1][\value{footnote}]{\footnotemark[#1]}
\author{
Lukas Einkemmer\thanks{Department of Mathematics, University of Innsbruck, Technikerstra\ss e 13, Innsbruck, Austria ({\tt lukas.einkemmer@uibk.ac.at}, {\tt alexander.ostermann@uibk.ac.at}).
The first author was supported by a scholarship of the Vizerektorat f\"ur Forschung, University of Innsbruck.} \and Alexander Ostermann\samethanks
}
\begin{document}

  \maketitle

  \begin{abstract}
  A rigorous convergence analysis of the Strang splitting algorithm with a discontinuous Galerkin approximation in space for the Vlasov--Poisson equations is provided. It is shown that under suitable assumptions the error is of order $\mathcal{O}\left(\dt^2+\dx^q +\dx^q / \dt \right)$, where $\dt$ is the size of a time step, $\dx$ is the cell size, and $q$ the order of the discontinuous Galerkin approximation. In order to investigate the recurrence phenomena for approximations of higher order as well as to compare the algorithm with numerical results already available in the literature a number of numerical simulations are performed.
  \end{abstract}

  \begin{keywords}
  Strang splitting, discontinuous Galerkin approximation, convergence analysis, Vlasov--Poisson equations, recurrence
  \end{keywords}

  \begin{AMS}
  65M12, 82D10, 65L05, 65M60
  \end{AMS}

  \pagestyle{myheadings}
  \thispagestyle{plain}
  \markboth{L.~EINKEMMER AND A.~OSTERMANN}{CONVERGENCE ANALYSIS STRANG SPLITTING \& DISCONTINUOUS GALERKIN}

  \section{Introduction}

    In astro- and plasma physics the behavior of a collisionless plasma is modeled by the Vlasov equation (see e.g.~\cite{Belli:2006})
    \begin{equation}\label{eq:intro-vlasov}
     \partial_t f(t,\boldsymbol{x},\boldsymbol{v}) + \boldsymbol{v} \cdot \nabla f(t,\boldsymbol{x},\boldsymbol{v}) + \boldsymbol{F} \cdot \nabla_{\boldsymbol{v}} f(t,\boldsymbol{x},\boldsymbol{v})=0,
    \end{equation}
    a kinetic model that in certain applications is also called the collisionless Boltzmann equation. It is posed in a $3+3$ dimensional phase space, where $\boldsymbol{x}$ denotes the position and $\boldsymbol{v}$ the velocity. The density function $f$ is the sought-after particle-probability distribution, and the (force) term $\boldsymbol{F}$ describes the interaction of the plasma with the electromagnetic field.

    In this paper we will study the convergence properties of a full discretization of the so called Vlasov--Poisson equations, where the force term
    \begin{equation*}
    \boldsymbol{F} = -\nabla \phi
    \end{equation*}
    is the gradient of the self-consistent electric potential $\phi$. This simplified model is used in various applications, e.g.~in the context of Landau damping.

    For the numerical solution of \eqref{eq:intro-vlasov}, various methods have been considered in the literature, for example particle methods and in particular the particle-in-cell method, see~\cite{Fahey:2008, Filbet:2001, Heath:2011}. Another prevalent approach consists in employing splitting methods, first proposed in the context of the Vlasov--Poisson equations by \cite{Cheng:1976} and later extended to the full Vlasov--Maxwell equations in \cite{Mangeney:2002}. Both papers use second-order Strang splitting. In the seminal paper \cite{Jahnke00}, the convergence properties of Strang-splitting for evolution equations were analyzed with the help of the variation-of-constants formula. This approach was recently extended to Vlasov-type equations in \cite{time_analysis}. In \cite{Besse:2005, Besse:2008, Respaud:2011} semi-Lagrangian methods are combined with Strang splitting. Convergence is shown in the case of the 1+1 dimensional Vlasov--Poisson equations, and in \cite{Bostan:2009} for a special case of the one-dimensional Vlasov--Maxwell equation. In these papers usually Hermite or spline interpolation is employed.

    On the other hand, discontinuous Galerkin approximations in space have been studied for the Vlasov--Poisson equations as well. In \cite{Heath:2007} and \cite{Heath:2011} the weak version of the Vlasov-Poisson equations is discretized by a discontinuous Galerkin scheme and integrated in time by Runge--Kutta methods. In \cite{Rossmanith:2011} a higher order semi-Lagrangian method in time is combined with a discontinuous Galerkin approximation in space. However, no convergence analysis is given. A direct Strang splitting scheme with a discontinuous Galerkin approximation is implemented in \cite{Mangeney:2002}. Since only a single value per cell is advanced in time this leads to a Van Leer scheme. In the before mentioned paper a numerical study of this scheme is conducted.

    In this paper, we will extend the analysis done in \cite{time_analysis} for the Vlasov--Poisson equations in 1+1 dimensions to the fully discretized case. More precisely, we will show that the (direct) Strang splitting scheme combined with a discontinuous Galerkin discretization in space is convergent. Our main result is stated in Theorem~\ref{thm:convergence} below. In addition, we will discuss some numerical aspects of our discretization in section \ref{sec:numerical_simulation}.

  \section{Vlasov--Poisson equations in 1+1 dimensions} \label{sec:vp11}

    In this section we perform the convergence analysis of Strang splitting in time with a discontinuous Galerkin approximation in space for the Vlasov--Poisson equations in 1+1 dimensions. To that end we first describe the setting as well as give the necessary regularity results (sections \ref{sec:setting} to \ref{sec:regularity}). We then describe the time (section \ref{sec:time_discretization}) and space discretization (sections \ref{sec:space_discretization} and \ref{sec:translation_projection}). In section \ref{sec:jackson_for_discontinuous} we will extend a commonly employed approximation result
    from $\mathcal{C}^{\order+1}$ functions to piecewise polynomials with a small jump discontinuity. This extension is crucial to show consistency (which is done in section \ref{sec:consistency}). Finally, convergence is established in section \ref{sec:convergence}.

    \subsection{Setting} \label{sec:setting}

      We will consider the Vlasov--Poisson equations in 1+1 dimensions, i.e.
      \begin{equation} \label{eq:vp11}
	      \left\{
	      \begin{aligned}
		      \partial_t f(t,x,v) &= -v\partial_x f(t,x,v) - E(f(t,\cdot,\cdot),x) \partial_v f(t,x,v) \\
		      \partial_x E(f(t,\cdot,\cdot),x)   &= \int_{\mathbb{R}} f(t,x,v)\,\mathrm{d}v-1 \\
		      f(0,x,v)			&= f_0(x,v)
	      \end{aligned}
	      \right.
      \end{equation}
      with periodic boundary conditions in space. The domain of interest is given by $(t,x,v)\in [0,T]\times [0,L] \times \mathbb{R}$. The periodic boundary conditions imply
      \begin{eqnarray*}
	      \forall x\in\mathbb{R} \colon f(t,x,v)=f(t,x+L,v).
      \end{eqnarray*}
      It is physically reasonable to assume at least an algebraic decay of $f_0$ in the velocity direction. Thus, we can approximate (to arbitrary precision) $f_0$ by an initial value with compact support. As will be apparent in the next section it is unnecessary to impose boundary conditions in the velocity direction for initial values with compact support. This is due to the fact that for such an initial value the solution will continue to have compact support for all finite time intervals $[0,T]$ (see Theorem \ref{thm:regularity}).

      For most of this presentation it will be more convenient to work with the following abstract initial value problem
      \begin{equation} \label{eq:abstract_ivp}
	\left\{
	\begin{aligned}
		\partial_{t}f(t)&=(A+B)f(t)  &  \\
		f(0)&=f_{0},   \\
	\end{aligned}
	\right.
      \end{equation}
      where we assume that $A$ is an (unbounded) linear operator. In addition, we assume that $B$ can be written in the form $Bf=B(f)f$, where $B(f)$ is an (unbounded) linear operator. For the Vlasov--Poisson equation in 1+1 dimensions the obvious choice is $Af = -v\partial_x f$ and $Bf=-E(f(t,\cdot,\cdot),x) \partial_v f$.

      In $1+1$ dimensions an explicit representation of the electric field is given by the following formula
      \begin{equation}	\label{eq:em_field}
	\begin{aligned}
		E(f(t,\cdot,\cdot),x) &= \int_0^L K(x,y) \left( \int_\mathbb{R} f(t,y,v)\mathrm{d}v - 1 \right) \mathrm{d}y, \\
		K(x,y) &= \begin{cases}
		  \frac{y}{L} &\quad 0 < y < x, \\
	      \frac{y}{L}-1 &\quad x < y < L,
	\end{cases}
	\end{aligned}
      \end{equation}
      where $E$ is chosen to have zero integral mean (electrostatic condition). This representation allows us to get a simple estimate of the electric field in terms of the probability density $f$. Note, however, that all the estimates employed in this paper could just as well be derived from potential theory. In fact, this approach is preferred as soon as one considers more than a single dimension in the space direction.

      \subsection{Definitions and notation}

      The purpose of this section is to introduce the notations and mathematical spaces necessary for giving existence, uniqueness, and regularity results as well as to conduct the estimates necessary for showing consistency and convergence.

      First we introduce some notations that will be employed in the subsequent analysis. Suppose that the differential equation $g^{\prime} = G(g)$ has (for a given initial value) a unique solution. In this case we denote the solution at time $t$ with initial value $g(t_0)=g_0$ by $g(t)=E_G(t-t_0,g_0)$. In addition, we will use $\Vert \cdot \Vert$ to denote the infinity norm and $\Vert \cdot \Vert_p$ to denote the $L^p$ norm on $[0,L]\times \mathbb{R}$.

      For estimating the errors in space and velocity we will use the Banach space $L^\infty([0,L]\times[-v_{\max},v_{\max}])$. Note that consistency bounds in the physically more reasonable $L^1$ norm are a direct consequence of the bounds we derive in the infinity norm. The situation is more involved in the case of stability (this is discussed in section~\ref{sec:convergence}).

      For our convergence analysis we need some regularity of the solution. To that end, we introduce the following spaces of continuously differentiable functions
      \begin{eqnarray*}
	      \mathcal{C}_{\mathrm{per,c}}^m
	      &:=& \left\{g \in \mathcal{C}^m(\mathbb{R}^2,\mathbb{R})  ,\, \forall x,v \colon (g(x+L,v)=g(x,v)) \land (\text{supp}\, g(x,\cdot) \text{ compact})  \right\}, \\
	      \mathcal{C}_{\mathrm{per}}^m
	      &:=& \left\{g \in \mathcal{C}^m(\mathbb{R},\mathbb{R})  ,\, \forall x \colon g(x+L)=g(x) \right\}.
      \end{eqnarray*}	
      Equipped with the norm of uniform convergence of all derivatives up to order $m$,
      $\mathcal{C}_{\text{per,c}}^m$ and $\mathcal{C}_{\text{per}}$ are Banach spaces.

      We also have to consider spaces that involve time. To that end let us define for any subspace $Z\subset C^m(\mathbb{R}^d,\mathbb{R})$ the space
      \begin{eqnarray*}
	      \mathcal{C}^m(0,T;Z)
	      &:=& \left\{ g \in \mathcal{C}^m([0,T],C^0) ,\,
	      ( g(t) \in Z ) \land
	      ( \sup_{t\in[0,T]}\left\Vert g(t) \right\Vert_{Z} < \infty )
	      \right\}.
      \end{eqnarray*}
      Below, we will either take the choice $Z=\mathcal{C}_{\text{per,c}}^m$ or $Z=\mathcal{C}_{\text{per}}^m$. It should be noted that functions in $\mathcal{C}^m(0,T;Z)$ possess spatial derivatives up to order $m$ that are uniformly bounded in $t\in[0,T]$.

    \subsection{Existence, uniqueness, and regularity} \label{sec:regularity}

    In this section we recall the existence, uniqueness, and regularity results of the Vlasov--Poisson
    equations in 1+1 dimensions. The theorem is stated with a slightly different notation in \cite{Besse:2008} and \cite{Besse:2005}.

    \begin{theorem} \label{thm:regularity}
        Assume that $f_0 \in \mathcal{C}_{\mathrm{per,c}}^m$ is non-negative, then $f \in \mathcal{C}^m(0,T;\mathcal{C}_{\mathrm{per,c}}^m)$ and $E(f(t,\cdot, \cdot),x)$ as a function of $(t,x)$ lies in $\mathcal{C}^m(0,T;\mathcal{C}_{\mathrm{per}}^m)$. In addition, we can find a number $Q(T)$ such that for all $t\in[0,T]$ and $x\in\mathbb{R}$ it holds that $\mathrm{supp}\, f(t,x,\cdot) \subset [-Q(T),Q(T)]$.
    \end{theorem}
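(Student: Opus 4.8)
This is a classical result, so the strategy is to recall the characteristic (Lagrangian) picture and bootstrap regularity from it. First I would fix $T>0$ and set up the characteristic ODEs associated with \eqref{eq:vp11}: for $(x,v)\in\mathbb{R}^2$ let $(X(s;t,x,v),V(s;t,x,v))$ solve $\dot X = V$, $\dot V = E(f(s,\cdot,\cdot),X)$ with data $X(t)=x$, $V(t)=v$, so that $f(t,x,v)=f_0(X(0;t,x,v),V(0;t,x,v))$ along characteristics. The key a priori bound is on the velocity support: if $\mathrm{supp}\,f_0(x,\cdot)\subset[-Q_0,Q_0]$ for all $x$, then a Gr\"onwall argument on $|V(s)|$ using $\dot V = E$ and an $L^\infty$ bound on $E$ gives $|V(s)-v|\le \int_0^t \|E(s,\cdot)\|_\infty\,\mathrm{d}s$, hence $\mathrm{supp}\,f(t,x,\cdot)\subset[-Q(T),Q(T)]$ with $Q(T)=Q_0+\int_0^T\|E\|_\infty$. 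To close this I need an a priori $L^\infty$ bound on $E$ itself: since $f\ge 0$ is transported by a divergence-free flow, $\|f(t)\|_\infty=\|f_0\|_\infty$ and $\int f(t,x,v)\,\mathrm{d}v$ is bounded (by $2Q(t)\|f_0\|_\infty$), and then \eqref{eq:em_field} with $|K|\le 1$ gives $\|E(t,\cdot)\|_\infty\le L(2Q(t)\|f_0\|_\infty+1)$. Combining the last two estimates yields a closed Gr\"onwall-type inequality for $Q(t)$ on $[0,T]$, which is finite, establishing the compact-support claim.

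Next I would establish existence and uniqueness. With the velocity support confined to a fixed compact set on $[0,T]$, the field $E$ depends on $f$ only through an integral over a bounded velocity slab, and \eqref{eq:em_field} shows $x\mapsto E(f(t,\cdot,\cdot),x)$ is Lipschitz with a constant controlled by $\|f(t)\|_\infty$ and $Q(T)$; moreover $E$ depends Lipschitz-continuously on $f$ in, say, the $L^1$ (or $L^\infty$) topology. This makes the characteristic flow a well-defined contraction-type fixed point problem on $\mathcal{C}([0,T];\mathcal{C}_{\mathrm{per,c}}^0)$ (or one cites the standard Vlasov--Poisson well-posedness in 1+1 dimensions from \cite{Besse:2005,Besse:2008}, as the excerpt already flags). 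I would obtain a short-time solution by Banach fixed point and extend it to all of $[0,T]$ using that the only way to break down is loss of control on the velocity support, which the a priori bound above precludes.

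The remaining and genuinely more technical step is propagation of $\mathcal{C}^m$ regularity: showing $f\in\mathcal{C}^m(0,T;\mathcal{C}^m_{\mathrm{per,c}})$ and $E\in\mathcal{C}^m(0,T;\mathcal{C}^m_{\mathrm{per}})$. The mechanism is again the representation $f(t,\cdot,\cdot)=f_0\circ\Phi_t$ where $\Phi_t$ is the (volume-preserving) characteristic flow: spatial derivatives of $f$ up to order $m$ are expressed, via the chain rule and Fa\`a di Bruno, in terms of $f_0$ and derivatives of the flow map, and derivatives of the flow solve the variational (linearized) ODEs whose coefficients involve $\partial_x^k E$. One then runs a bootstrap: $\|E(t,\cdot)\|_{\mathcal{C}^0}$ bounded $\Rightarrow$ flow is $\mathcal{C}^1$ in $(x,v)$ with controlled derivatives $\Rightarrow$ $f\in\mathcal{C}^1$ $\Rightarrow$ $\int f\,\mathrm{d}v\in\mathcal{C}^1$ in $x$ $\Rightarrow$ (differentiating \eqref{eq:em_field} in $x$, the jump terms in $\partial_x K$ cancel because $K(x,x^-)-K(x,x^+)$ contributes exactly the density) $E\in\mathcal{C}^1$ in $x$; iterate $m$ times. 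Regularity in $t$ then follows from the PDE itself ($\partial_t f = -v\partial_x f - E\partial_v f$ has the right spatial regularity on the right-hand side). The main obstacle I anticipate is keeping the constants uniform through this induction — each stage of the bootstrap must produce Gr\"onwall bounds on $[0,T]$ with no finite-time blow-up, and one must check carefully that differentiating the singular kernel $K$ in \eqref{eq:em_field} does not lose a derivative (it does not, by the electrostatic/zero-mean normalization, but this deserves explicit verification). Given the length of such an argument, I would present the key estimates and otherwise defer to \cite{Besse:2005,Besse:2008} for the routine parts.
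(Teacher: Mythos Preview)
Your sketch is correct and follows the classical characteristic-based argument for the one-dimensional Vlasov--Poisson system: a priori control of the velocity support via a Gr\"onwall loop between $Q(t)$ and $\|E(t,\cdot)\|_\infty$, a fixed-point construction for existence and uniqueness, and then an inductive bootstrap for $\mathcal{C}^m$ regularity of the flow, $f$, and $E$. The computation you flag for the kernel is also right: differentiating \eqref{eq:em_field} in $x$ produces only the boundary term $K(x,x^-)-K(x,x^+)=1$ multiplied by $\rho(x)-1$, recovering Poisson's equation without loss of a derivative.

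That said, the paper itself does not prove this theorem at all: its entire proof is the sentence ``A proof can be found in \cite{glassey:1996}.'' So you have not merely matched the paper's argument but substantially exceeded it by outlining the actual content of the cited reference. The only minor discrepancy is that you point to \cite{Besse:2005,Besse:2008} for the well-posedness step, whereas the paper attributes the full result to Glassey; either citation is adequate, and your outline is what one would find there.
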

    \begin{proof}
	    A proof can be found in \cite{glassey:1996}.
    \end{proof}

    We also need a regularity result for the electric field that does not directly result from a
    solution of the Vlasov--Poisson equations, but from some generic function $f$ (e.g., an $f$ computed from an
    application of a splitting operator to $f_0$).

    \begin{corollary} \label{col:regB}
	    For $f \in  \mathcal{C}_{\mathrm{per,c}}^m$ it holds that
	    $E(f,\cdot)\in \mathcal{C}_{\mathrm{per}}^m$.
    \end{corollary}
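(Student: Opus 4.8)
The plan is to reduce the statement to the regularity of the neutralized spatial density
\[
  \rho(x) := \int_{\mathbb{R}} f(x,v)\,\mathrm{d}v - 1
\]
and then to read off the regularity of $E(f,\cdot)$ from the explicit representation \eqref{eq:em_field}. First I would note that, since $f\in\mathcal{C}_{\mathrm{per,c}}^m$ is compactly supported in the velocity variable, uniformly for $x$ in one period (by $L$-periodicity and continuity, the support of $f$ and all its $x$-derivatives are bounded on $[0,L]\times\mathbb{R}$), the integral defining $\rho$ extends over a fixed compact $v$-interval and one may differentiate under the integral sign up to order $m$. Together with the $L$-periodicity of $f$ in $x$ this gives $\rho\in\mathcal{C}_{\mathrm{per}}^m$, with $\rho^{(k)}(x)=\int_{\mathbb{R}}\partial_x^k f(x,v)\,\mathrm{d}v$ for $0\le k\le m$.

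Next I would insert this into \eqref{eq:em_field}, rewritten with the piecewise kernel split at $y=x$ as
\[
  E(f,x)=\int_0^x \frac{y}{L}\,\rho(y)\,\mathrm{d}y+\int_x^L\left(\frac{y}{L}-1\right)\rho(y)\,\mathrm{d}y .
\]
The integrand is continuous in $y$, so $E(f,\cdot)$ is continuous, and differentiating the two variable-limit integrals yields $\partial_x E(f,x)=\rho(x)$. Hence $E(f,\cdot)$ is one derivative smoother than $\rho$, i.e.\ of class $\mathcal{C}^{m+1}$ on $[0,L]$, which is already more than the claimed $\mathcal{C}^m$ differentiability. For the periodicity one computes from the same formula that $E(f,L)-E(f,0)=\int_0^L\rho(y)\,\mathrm{d}y$, so the boundary values agree precisely when $\rho$ has vanishing mean over a period, i.e.\ $\int_0^L\int_{\mathbb{R}} f(x,v)\,\mathrm{d}v\,\mathrm{d}x=L$; this is the electroneutrality condition already implicit in the choice \eqref{eq:em_field} of $E$, and it holds for all functions occurring in the analysis (the initial data are normalized probability densities, and this normalization is preserved by the splitting steps, since $A$ and $B$ are transport operators conserving the total mass). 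Since $\rho$ together with all its derivatives up to order $m$ is $L$-periodic, matching of the boundary values then upgrades $E(f,\cdot)$ to an $L$-periodic function of class $\mathcal{C}^{m+1}$, hence in $\mathcal{C}_{\mathrm{per}}^m$.

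I expect the only mildly delicate points to be the two flagged above: justifying the differentiation under the integral sign, which rests on reading the compact-support hypothesis as uniform-in-$x$ compact support in $v$, and the vanishing-mean bookkeeping required for the genuine periodicity (as opposed to mere smoothness) of $E(f,\cdot)$. Everything else is a direct computation with the explicit kernel $K$.
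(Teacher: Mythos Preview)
Your proposal is correct and follows exactly the route the paper indicates, namely reading off the regularity of $E(f,\cdot)$ from the explicit kernel representation \eqref{eq:em_field}; the paper's own proof is only a two-line pointer to Theorem~\ref{thm:regularity} and to \eqref{eq:em_field}, so you have supplied the details it omits. Your observation that genuine $L$-periodicity of $E$ hinges on the neutrality condition $\int_0^L\!\int_{\mathbb{R}} f\,\mathrm{d}v\,\mathrm{d}x=L$ is accurate and is indeed left implicit in the paper's statement.
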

    \begin{proof}
	    The result follows from the proof of Theorem \ref{thm:regularity}.
	    In addition, in the 1+1 dimensional case it can also be followed from
	    the exact representation of the electromagnetic field that is given in equation \eqref{eq:em_field}.
    \end{proof}

    It should also be noted that due to the proof of Theorem \ref{thm:regularity}, the regularity results
    given can be extended to the differential equations generated by $B$ and $B(g)$ (for any sufficiently regular $g$). Thus,
    Theorem \ref{thm:regularity} remains valid if $E_B(t,f_0)$ or $\mathrm{e}^{tB(g)}f_0$ is substituted for $f(t)$.

    \subsection{Time discretization} \label{sec:time_discretization}

      We use Strang-splitting for the time discretization of \eqref{eq:abstract_ivp}. This results in the scheme
      \begin{subequations}\label{eq:time-discr}
      \begin{equation}
      	f_{k+1} = S_k f_{k},
      \end{equation}
      where $f_k$ is the numerical approximation to $f(t)$ at time $t=k\dt$ with step size $\dt$. The splitting operator $S_k$ is the  composition of three abstract operators
      \begin{equation}\label{eq:time-strang}
	    S_k = S^{(A)}S^{(B)}_kS^{(A)},
      \end{equation}
      where
      \begin{equation}
        S^{(A)} = \e^{\frac{\dt}{2} A},\qquad S^{(B)}_k = \e^{\dt B(f_{k+1/2})}
      \end{equation}
      with $f_{k+1/2} =  \e^{\frac{\dt}{2}B(f_k)} \e^{\frac{\dt}{2}A} f_k.$ The choice of $f_{k+1/2}$ is such as to retain second order in the non-linear case while still only advection problems have to be solved in the numerical approximation (for more details see e.g. \cite{time_analysis}). Note that since $\e^{\frac{\dt}{2}B(f_k)}$ can be represented by a translation in the velocity direction only (which has no effect on the computation of the electric field) we can use here
      \begin{equation} \label{eq:f_k+1/2}
       f_{k+1/2} = S^{(A)} f_k.
      \end{equation}
      \end{subequations}
      This is convenient as the computation of $S^{(A)}f_k$ incurs no performance overhead in the actual computation.

    \subsection{Space discretization}  \label{sec:space_discretization}

      We proceed in two steps. First, we introduce a cutoff in the velocity direction, i.e.~we fix $v_{\text{max}}$ and consider the problem on the domain $[0,L]\times[-v_{\text{max}}, v_{\text{max}}]$. Note that for an initial value with compact support with respect to velocity and a sufficiently large $v_{\text{max}}$ this is still exact.

      Second, we introduce a discontinuous Galerkin approximation in both the space and velocity direction. For simplicity, we consider a uniform rectangular grid. In this case, the cell boundaries are given by the following relations
      \begin{align*}
	x_i = i \dx_x, \qquad &0 \leq i \le N_x, \\
	v_j = j \dx_v - v_{\text{max}}, \qquad &0 \leq j \le N_v.
      \end{align*}
      Within each cell, i.e.~a square $R_{ij} = [i \dx_x,(i+1) \dx_x]\times[j \dx_v - v_{\text{max}}, (j+1) \dx_v - v_{\text{max}}]$, $0 \leq i < N_x$, $0 \leq j < N_v$, we perform an orthogonal projection with respect to the basis of Legendre polynomials of degree at most $\order$ in $x$ and $v$. To be more precise, suppose that $g\in L^2\left( [0,L]\times[-v_{\text{max}}, v_{\text{max}}]\right)$; then the operator $P$ is defined such that $Pg$ restricted to $R_{ij}$ for all $i,j$ is the (unique) polynomial that results from the projection of $g$ onto the $(\order+1)(\order+1)$ dimensional subspace generated by the (appropriately translated and scaled) Legendre polynomials up to degree $\order$. It is well known that this projection operator is given by
      \begin{subequations}\label{eq:projection_restricted}
      \begin{equation}
	    Pg \vert_{R_{ij}} = \sum_{k=0}^{\order} \sum_{m=0}^{\order} b_{km}^{ij}P_k^{(1)}(x) P_m^{(2)}(v)
      \end{equation}
      with coefficients
      \begin{equation}
	     b_{km}^{ij}=\frac{(2k+1)(2m+1)}{\dx_x \dx_v} \int_{R_{ij}} P_m^{(1)}(x) P_k^{(2)}(v)g(x,v) \,\mathrm{d}(x,v).
      \end{equation}
      \end{subequations}
      The translated and scaled Legendre polynomials are here defined as
      \[
        P_l^{(1)}(\xi) = p_l\left(\frac{2(\xi-x_i)}{\dx_x}-1\right), \qquad P_l^{(2)}(\xi) = p_l\left(\frac{2(\xi-v_j)}{\dx_v}-1\right),
      \]
      where $p_l$ denote the Legendre polynomials with the standard normalization, i.e.
      \[
	    \int_{-1}^{1} p_l(x) p_j(x) \mathrm{d}x = \frac{2}{2l+1} \delta_{lj}.
      \]

      It should be emphasized that the projection in a single cell is independent from the projection in any other cell. As this is not true for Hermite or spline interpolation it gives the discontinuous Galerkin scheme a computational advantage (see \cite{Mangeney:2002} and section \ref{sec:translation_projection}).

      Now we have to introduce an approximation to the abstract splitting operator \eqref{eq:time-strang} that takes the space discretization into account. We use the decomposition
      \begin{subequations}\label{eq:full-discr}
      \begin{equation}
	    \tilde{S}_{k} = \tilde{S}^{(A)}\tilde{S}^{(B)}_k\tilde{S}^{(A)},
      \end{equation}
      where
      \begin{equation}
	    \tilde{S}^{(A)} = PS^{(A)},\qquad
	    \tilde{S}^{(B)}_k = P \e^{\dt B(\tilde{f}_{k+1/2})}
      \end{equation}
      with
      \begin{equation}
      	\tilde{f}_{k+1/2} = \tilde{S}^{(A)}\tilde{f}_k.
      \end{equation}
      The fully discrete scheme then reads
      \begin{equation}
		\tilde{f}_{k+1} = \tilde{S}_k \tilde{f}_k, \qquad
      	\tilde{f}_0 = P f_0.
      \end{equation}
      \end{subequations}
      Note that $\tilde{f}_k$ represents the full approximation in time and space at time $t_k$.

   \subsection{Translation and projection} \label{sec:translation_projection}
      The principle algorithm has already been laid out in sections \ref{sec:time_discretization} and \ref{sec:space_discretization}. However, the description given so far is certainly not sufficient as the straightforward implementation (first computing an exact solution and then projection onto a finite dimensional subspace) is clearly not a viable option. Thus, the purpose of this section is to describe in more detail the computation of
      \begin{equation*}
	\tilde{S}^{(A)}f(x,v) = P \e^{\frac{\dt}{2} A}f(x,v) = P f\left( x-\frac{\dt}{2}v,v \right)
      \end{equation*}
      and
      \begin{equation*}
	\tilde{S}^{(B)}_k f(x,v) = P\e^{\dt B(\tilde f_{k+1/2})} f(x,v) = P f\left(v,x -\dt E(\tilde f_{k+1/2},x) \right).
      \end{equation*}

      Without loss of generality let us consider a translation of the form $f\left( x - \dt g(v),v \right)$. In addition, we fix the cell of interest as $[0,\dx]\times[0,\dx]$.
      Now we are primarily interested in an interval of length $\dx$ and thus define $P_l(x) = p_l(\frac{2x}{\dx}-1)$. Then we have
      \[
	\int_{0}^{\dx} P_l(x) P_j(x) \mathrm{d}x = \frac{\dx}{2l+1} \delta_{lj}.
      \]
      We have to first translate and then project a function that can be expanded as
      \[
	f(x,v) = \sum_{m=0}^M \sum_{n=0}^N b_{mn} P_m(x) P_n(v)
      \]
      onto the finite dimensional approximation space. Our goal is to compute the coefficients of $f(x-\dt g(v) ,v)$. These are given by
      \begin{align}
	a_{lj}
	&= \frac{(2l+1)(2j+1)}{\dx^2}\int_0^{\dv} \int_0^{\dx} P_{l}(x)P_{j}(v)f(x-\dt g(v),v)\, \mathrm{d}x\, \mathrm{d}v  \notag\\
	&= \frac{(2l+1)(2j+1)}{\dx} \sum_{m,n}b_{mn}\int_0^{\dv} P_{j}(v)P_{n}(v)\left(\frac{1}{\dx}\int_0^{\dx} P_{l}(x)P_{m}(x- \dt g(v)) \,\mathrm{d}x \right) \,\mathrm{d}v \notag \\
	&= \frac{(2l+1)(2j+1)}{\dv}\sum_{m,n}b_{mn}\int_0^{\dv} P_{j}(v)P_{n}(v)H_{lm}(g(v)\dt/\dx ) \,\mathrm{d}v, \label{eq:projected}
      \end{align}
      where
      \begin{equation*}
	H_{lm}(\delta) = \frac{1}{\dx}\int_0^h P_{l}(x)P_{m}(x- \delta \dx) \,\mathrm{d}x, \qquad
	\delta = \frac{g(v) \dt}{\dx}.
      \end{equation*}
      For a fixed $v$ the function $H_{lm}$ can be evaluated explicitly. This is done, up to order $3$, in \cite{Mangeney:2002}. We will instead use a Mathematica program which can generate a representation of $H_{lm}$ (up to arbitrary order) in {\tt C} code that can then be embedded in the {\tt C++} implementation. Note that it is sufficient to only evaluate $H_{lm}$ for $0 < \delta < 1$ as the negative values of $\delta$ follow by a symmetry argument and integer multiplies correspond to a shift of the cells only. Also, the computation of $H_{lm}(\delta)$ for $-1<\delta<1$ shows that only two terms from the sum in \eqref{eq:projected} do not vanish. That is, we need only the data from the same cell as well as a single neighboring cell (either the right or left neighbor) to compute an application of a splitting operator. This follows easily from the fact that the support of the Legendre basis functions are within a single cell only. More details are given in \cite{Mangeney:2002}.

      It remains to evaluate the remaining integral in equation \eqref{eq:projected}. Since $g(v)$ is at most a polynomial of degree $\order$ (in a single cell) we have to integrate a polynomial of degree at most $\order^2$. We use a Gauss--Legendre quadrature rule of appropriate order.

      Note that in order to guarantee the stability of our scheme it is of vital importance that we can compute the exact result of the integral in equation \eqref{eq:projected}. If only an approximation is used instabilities can occur (see section \ref{sec:numerical_stability} and \cite{morton:1988}).

    \subsection{Polynomial approximation of functions with a small jump discontinuity} \label{sec:jackson_for_discontinuous}

      In this section our goal is to prove a bound concerning the approximation of piecewise polynomials of degree $\order$ with a single jump discontinuity. For notational simplicity we will be concerned with a function of a single variable only; the general case is a simple tensor product of the situation described in this section. Thus, the operator $P$ is here understood as the orthogonal projection with respect to the one-dimensional Legendre polynomials of degree less or equal to $\order$.
      The starting point of our investigation is the result in Theorem \ref{thm:jackson}, which is applicable only if we can assume that $g$ is $\order+1$ times continuously differentiable. This assumption is not satisfied for the discontinuous Galerkin approximation considered in this paper. However, we will use the result as a stepping stone to prove a similar bound for the approximation of functions with a small jump discontinuity.
      \begin{theorem} \label{thm:jackson} Suppose that $g\in\mathcal{C}^{\order +1}([0,\dx])$. Then
	\[
	  \left\Vert g^{(k)} - (Pg)^{(k)} \right\Vert \leq C  \dx^{\order-k+1} \Vert g^{(\order+1)}\Vert
	\]
      for all $k\in\{0\ldots,\order\}$.
      \end{theorem}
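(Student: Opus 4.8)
The plan is to reduce the estimate to a fixed reference interval by an affine rescaling and then run a standard Bramble--Hilbert type argument there. Set $I=[0,1]$ and, for $g\in\mathcal{C}^{\order+1}([0,\dx])$, introduce $u(\xi)=g(\dx\xi)$, so that $u\in\mathcal{C}^{\order+1}(I)$ and $u^{(k)}(\xi)=\dx^{k}g^{(k)}(\dx\xi)$, whence $\Vert u^{(k)}\Vert_{L^\infty(I)}=\dx^{k}\Vert g^{(k)}\Vert$. The key observation is that the $L^2$-orthogonal projection onto polynomials of degree at most $\order$ commutes with this change of variables: writing $\hat P$ for the projection on $I$, one checks from the defining orthogonality relations \eqref{eq:projection_restricted} that $(Pg)(\dx\xi)=(\hat P u)(\xi)$, because the map $\xi\mapsto\dx\xi$ carries the rescaled Legendre basis on $[0,\dx]$ to that on $I$ and the constant Jacobian cancels. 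Consequently $(u-\hat Pu)^{(k)}(\xi)=\dx^{k}(g-Pg)^{(k)}(\dx\xi)$, and the theorem follows once one proves the analogous estimate on $I$ with a constant depending only on $\order$ and $k$:
\[
  \Vert u^{(k)}-(\hat P u)^{(k)}\Vert_{L^\infty(I)}\le C\,\Vert u^{(\order+1)}\Vert_{L^\infty(I)}.
\]

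On the reference interval I would proceed as follows. Let $q$ be the Taylor polynomial of $u$ of degree $\order$ about the midpoint of $I$; Taylor's theorem with integral remainder gives $\Vert(u-q)^{(k)}\Vert_{L^\infty(I)}\le C_k\,\Vert u^{(\order+1)}\Vert_{L^\infty(I)}$ for every $k\in\{0,\ldots,\order\}$. Since $\deg q\le\order$ it is reproduced by the projection, $\hat P q=q$, so
\[
  u^{(k)}-(\hat P u)^{(k)}=(u-q)^{(k)}-\bigl(\hat P(u-q)\bigr)^{(k)}.
\]
The first term is already controlled. For the second, $\hat P(u-q)$ is a polynomial of degree at most $\order$ on the fixed interval $I$, where all norms are equivalent, so $\Vert(\hat P(u-q))^{(k)}\Vert_{L^\infty(I)}\le M_k\,\Vert \hat P(u-q)\Vert_{L^\infty(I)}\le M_k'\,\Vert \hat P(u-q)\Vert_{L^2(I)}$; using that $\hat P$ has operator norm one on $L^2(I)$ and that $\Vert\cdot\Vert_{L^2(I)}\le\Vert\cdot\Vert_{L^\infty(I)}$ on $I$, this is $\le\Vert u-q\Vert_{L^\infty(I)}\le C\,\Vert u^{(\order+1)}\Vert_{L^\infty(I)}$. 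Adding the two contributions yields the reference estimate, and undoing the rescaling (dividing by $\dx^{k}$ and replacing $\Vert u^{(\order+1)}\Vert_{L^\infty(I)}$ by $\dx^{\order+1}\Vert g^{(\order+1)}\Vert$) gives $\Vert g^{(k)}-(Pg)^{(k)}\Vert\le C\,\dx^{\order+1-k}\Vert g^{(\order+1)}\Vert$.

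The routine parts are the Taylor remainder bound and the bookkeeping of powers of $\dx$ under the scaling. The one step that deserves care is the passage from the $L^2$-optimality of $P$ to bounds in the sup norm and on derivatives: this is precisely why the argument is staged on a fixed interval, so that the finite-dimensional norm equivalence (equivalently a Markov-type inverse inequality) supplies a constant that depends on $\order$ and $k$ but not on $\dx$; tracking that constant through the affine map is exactly what produces the stated $\dx$-powers, including the mildly counterintuitive gain for $k<\order$.
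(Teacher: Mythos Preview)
Your argument is correct. The scaling to a reference interval, the Taylor-polynomial subtraction exploiting $\hat P q=q$, the $L^2$-contractivity of $\hat P$, and the finite-dimensional norm equivalence on the fixed interval all combine cleanly to give the stated bound with the right powers of $h$.

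The paper, however, takes a different and rather slicker route. It invokes the classical fact (from \cite{phillips03}) that the $L^2$-projection error $g-Pg$ changes sign exactly $\ell+1$ times on $[0,h]$; by Rolle's theorem the $k$th derivative $g^{(k)}-(Pg)^{(k)}$ then has at least $\ell+1-k$ zeros, so $(Pg)^{(k)}$, a polynomial of degree $\ell-k$, actually interpolates $g^{(k)}$ at $\ell+1-k$ nodes. The standard Lagrange remainder for polynomial interpolation immediately yields the bound, with $(g^{(k)})^{(\ell+1-k)}=g^{(\ell+1)}$ and the nodal product bounded by $h^{\ell+1-k}$. This is shorter and gives, in principle, explicit constants, but it leans on a structural property specific to the $L^2$ projection. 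Your Bramble--Hilbert style argument is more robust: it works verbatim for any linear operator that reproduces polynomials of degree $\le\ell$ and is bounded in some norm on the reference cell, so it generalizes more readily (e.g.\ to other quasi-interpolants or to higher dimensions), at the cost of less transparent constants.
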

      \begin{proof}
      		In \cite[p.~59]{phillips03} it is shown that $Pg-g$ changes sign $\order+1$ times. From this, it follows that $(Pg)^{(k)}-g^{(k)}$ changes sign $\order+1-k$ times. Therefore, $(Pg)^{(k)}$ is an interpolation polynomial of $g^{(k)}$ of degree $\order+1-k$. Using the standard error representation for polynomial interpolation we get the desired result.\end{proof}

      For numerical methods that rely on a smooth approximation of the solution (for example, using Hermite or spline interpolation as in \cite{BesseMehr:2008}) sufficient regularity in the initial condition implies the bound given in Theorem \ref{thm:jackson} for any approximation that has to be made in the course of the algorithm.

      This assumption, however, is violated if we consider a discontinuous Galerkin approximation as, even if the initial condition is sufficiently smooth, the approximation will include a jump discontinuity at the cell boundary. Thus, we are interested in a bound that still gives us an equivalent result to that stated in Theorem \ref{thm:jackson} in the case of a function with a small jump discontinuity. The following theorem is thus the central result of this section. For simplicity, we consider a single cell only.
      \begin{theorem} \label{thm:discont_approx} Suppose that $g\colon [0,\dx]\to\mathbb{R}$ is piecewise polynomial of degree $\order$ with a single discontinuity at $x_0\in[0,\dx]$. In addition, we assume that the jump heights $\varepsilon^{(k)} =  g^{(k)}(x_0+)-g^{(k)}(x_0-) $ satisfy $\vert \varepsilon^{(k)} \vert \leq c \dx^{\order-k+1}$ for all $k \in \{0,\dots,\order\}$. Then,
      \[
	\left\Vert g^{(k)} - (Pg)^{(k)} \right\Vert \leq C \dx^{\order - k + 1},
      \]
      for all $k \in \{0,\dots,\order\}$. Note that the constant $C$ only depends on $c$ and the constant in Theorem~\ref{thm:jackson}.
      \end{theorem}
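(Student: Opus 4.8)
The plan is to isolate the jump in a single remainder term and to estimate this remainder and its $L^2$-projection separately. Write $g=g_1$ on $[0,x_0]$ and $g=g_2$ on $[x_0,\dx]$ with $g_1,g_2$ polynomials of degree at most $\order$; if $x_0\in\{0,\dx\}$ the function $g$ is already such a polynomial and the assertion is trivial, so we may assume $0<x_0<\dx$. We extend $g_1$ to all of $[0,\dx]$ and set $r:=g-g_1$. Then $r\equiv 0$ on $[0,x_0)$ and $r=w:=g_2-g_1$ on $(x_0,\dx]$, where $w$ is a polynomial of degree at most $\order$ whose Taylor coefficients at $x_0$ are exactly the prescribed jumps, $w^{(k)}(x_0)=g^{(k)}(x_0+)-g^{(k)}(x_0-)=\varepsilon^{(k)}$ for $k=0,\dots,\order$. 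Since $g_1$ is a polynomial of degree at most $\order$ we have $Pg_1=g_1$, hence $g-Pg=r-Pr$, and it suffices to bound $\Vert r^{(k)}-(Pr)^{(k)}\Vert$.

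First, we estimate the term $r^{(k)}$ directly. From $w(x)=\sum_{m=0}^{\order}\frac{\varepsilon^{(m)}}{m!}(x-x_0)^m$ we obtain, using $|x-x_0|\le\dx$ together with the hypothesis $|\varepsilon^{(m)}|\le c\,\dx^{\order-m+1}$,
\[
  \bigl|w^{(k)}(x)\bigr|=\Bigl|\sum_{m=k}^{\order}\frac{\varepsilon^{(m)}}{(m-k)!}(x-x_0)^{m-k}\Bigr|\le c\,\dx^{\order-k+1}\sum_{i=0}^{\order-k}\frac{1}{i!}\le c\,\e\,\dx^{\order-k+1}
\]
for every $x\in[0,\dx]$ and every $k\in\{0,\dots,\order\}$. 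Therefore $\Vert r^{(k)}\Vert=\Vert w^{(k)}\Vert_{L^\infty([x_0,\dx])}\le c\,\e\,\dx^{\order-k+1}$ and, in particular (taking $k=0$), $\Vert r\Vert\le c\,\e\,\dx^{\order+1}$.

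It remains to estimate $(Pr)^{(k)}$, and this is the step where we expect the only real difficulty, since $P$ does not commute with differentiation and $r$ is not smooth. The remedy is a scaling argument combined with finite-dimensionality. Pulling everything back to the reference interval $[0,1]$ shows, on the one hand, that the orthogonal projection onto polynomials of degree at most $\order$ is bounded from $L^\infty([0,\dx])$ to $L^\infty([0,\dx])$ with a constant depending only on $\order$ (it maps into a finite-dimensional space on which the $L^\infty$ and $L^2$ norms are equivalent), so that $\Vert Pr\Vert\le C_\order\Vert r\Vert\le C_\order\,c\,\e\,\dx^{\order+1}$; and, on the other hand, that an inverse (Markov-type) estimate holds on $[0,\dx]$, namely $\Vert q^{(k)}\Vert\le M_{\order,k}\,\dx^{-k}\Vert q\Vert$ for every polynomial $q$ of degree at most $\order$. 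Applying this to $q=Pr$ gives $\Vert(Pr)^{(k)}\Vert\le M_{\order,k}\,\dx^{-k}\Vert Pr\Vert\le M_{\order,k}C_\order\,c\,\e\,\dx^{\order-k+1}$.

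Combining the two estimates by the triangle inequality yields
\[
  \Vert g^{(k)}-(Pg)^{(k)}\Vert=\Vert r^{(k)}-(Pr)^{(k)}\Vert\le\bigl(1+M_{\order,k}C_\order\bigr)\,c\,\e\,\dx^{\order-k+1},
\]
which is the claimed bound, the constant depending only on $c$ and on constants determined by $\order$ and $k$ (of the same nature as the constant in Theorem~\ref{thm:jackson}). An equivalent route would be to expand $r$ as a linear combination of the truncated powers $(x-x_0)_+^m$, $0\le m\le\order$, with coefficients $\varepsilon^{(m)}/m!$, and to bound the projection of each truncated power by the same scaling argument. Finally, the two-variable statement used later follows by a tensor-product argument, applying the one-dimensional estimate in $x$ and then in $v$; the obstacle --- controlling the derivatives of the projected remainder without a commutation identity --- is of exactly the same kind.
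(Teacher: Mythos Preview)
Your argument is correct, and it is genuinely different from the paper's route. The paper constructs an auxiliary \emph{smooth} polynomial $p$ of degree $\ell+1$ (obtained by linearly smoothing the piecewise constant $g^{(\ell)}$ and integrating back with initial data $g^{(k)}(0)$), shows $\Vert p-g\Vert\le c\,\dx^{\ell+1}$ and $\Vert p^{(\ell+1)}\Vert\le c$, and then splits $\Vert Pg-g\Vert\le\Vert P(g-p)\Vert+\Vert Pp-p\Vert+\Vert p-g\Vert$, invoking Theorem~\ref{thm:jackson} for the middle term; the derivative case is handled the same way together with Markov's inequality. You instead subtract the left polynomial piece $g_1$, use $Pg_1=g_1$ to reduce to $r-Pr$, and bound both summands directly from the Taylor expansion of $w=g_2-g_1$ at $x_0$, the $L^\infty$-boundedness of $P$, and Markov. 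Your approach is shorter and, notably, does not use Theorem~\ref{thm:jackson} at all, so the dependence of $C$ on ``the constant in Theorem~\ref{thm:jackson}'' in the statement is replaced by a dependence on the Lebesgue constant $C_\ell$ and the Markov constants $M_{\ell,k}$---exactly the same auxiliary ingredients the paper also uses, but without the detour through a $\mathcal C^{\ell+1}$ interpolant. The paper's construction, on the other hand, fits its declared narrative of using the smooth-case estimate as a ``stepping stone'' and makes the passage from the $\mathcal C^{\ell+1}$ result to the discontinuous case more visibly parallel.
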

      \begin{proof}
	Let us assume that $x_0\in(0,\dx)$ (otherwise the result is immediate). We smooth the piecewise constant function $g^{(\order)}$ in the following way
	\[
	  p^{(\order)}(x) = \frac{\varepsilon^{(\order)}}{\dx}x+g^{(\order)}(0).
	\]
	Now, upon integration we get
	\[
	  p(x) = \frac{\varepsilon^{(\order)}}{\dx}\frac{x^{\order+1}}{(\order+1)!}+\sum_{k=0}^{\order} a_{k}x^{k},
	\]
	where we choose the coefficients in such a way that the Taylor polynomial of $g$ expanded at $0$ matches the first $\order$ terms of $g$, i.e. $a_{k}=\frac{g^{(k)}(0)}{k!}$. This gives us the following representation
	\begin{equation} \label{eq:p}
	  p(x) = \frac{\varepsilon^{(\order)}}{\dx}\frac{x^{\order+1}}{(\order+1)!}+\sum_{k=0}^{\order}\frac{g^{(k)}(0)}{k!}x^{k}.
	\end{equation}
	Now let us consider the integral (for $x>x_0$)
	\begin{align*}
	  \int_{0}^{x}p^{(m)}(y)-g^{(m)}(y)\,\mathrm{d}y
	  &= p^{(m-1)}(x)-g^{(m-1)}(x)-p^{(m-1)}(0)+g^{(m-1)}(0) \\ &\qquad + g^{(m-1)}(x_0+) - g^{(m-1)}(x_0-) \\
	  &= p^{(m-1)}(x)-g^{(m-1)}(x) + \varepsilon^{(m-1)},
	\end{align*}
	where the last identity follows from the choice we made above. Now we know that (for $s_{\order-1} > x_0$)
	\[
	  \int_{0}^{s_{\order-1}} p^{(\order)}(s_{\order})-g^{(\order)}(s_{\order})\, \mathrm{d}s_{\order}
	  =  p^{(\order-1)}(s_{\order-1})-g^{(\order-1)}(s_{\order-1}) + \varepsilon^{(\order-1)}  \\
	\]
	and further (for $s_{\order-2} > x_0$)
	\begin{align*}
	  \int_{0}^{s_{\order-2}} p^{(\order-1)}(s_{\order-1})-g^{(\order-1)}(s_{\order-1}) + \varepsilon^{(\order-1)}   \, \mathrm{d}s_{\order-1}
	  &= p^{(\order-2)}(s_{\order-2})-g^{(\order-2)}(s_{\order-2}) \\ &\qquad + \varepsilon^{(\order-2)} + \varepsilon^{(\order-1)} s_{\order-2} .
	\end{align*}
	By an induction argument we can then estimate the approximation error as
	\begin{align*}
	  \vert p(x)-g(x) \vert
	  &\leq \left| \int_{0}^{x} \int_0^{s_1} \dots\int_{0}^{s_{\order-1}}p^{(\order)}(s_{\order})-g^{(\order)}(s_{\order})\,
    \mathrm{d}s_{\order} \dots \mathrm{d}s_2 \mathrm{d}s_1 \right| + \sum_{k=0}^{\order-1} \vert \varepsilon^{(k)}\vert  h^{k}  \\
	  &\leq  \int_{0}^{x} \int_0^{s_1} \dots\int_{0}^{s_{\order-1}} \left| p^{(\order)}(s_{\order})-g^{(\order)}(s_{\order}) \right| \,
    \mathrm{d}s_{\order} \dots \mathrm{d}s_2 \mathrm{d}s_1  + c \dx^{\order+1} \\
	  &\leq c \dx^{\order+1}.
	\end{align*}
	In addition we easily follow from equation \eqref{eq:p} that
	\[
	  \Vert p^{(\order+1)} \Vert \leq \frac{\vert\varepsilon^{(\order)}\vert}{\dx} \leq c.
	\]
	Now let us estimate the approximation error
      \begin{align*}
	\Vert Pg-g \Vert
	&=\Vert Pg-Pp+Pp-p+p-g\Vert \\
	&\leq \Vert P(g-p)\Vert+\Vert Pp-p\Vert+\Vert p-g\Vert \\
	&\leq C \dx^{\order+1},
      \end{align*}
      where in the last line we have used Theorem \ref{thm:jackson} and the well known fact that the projection operator $P$ is a bounded operator in the infinity norm. The latter can be seen, for example, by estimating \eqref{eq:projection_restricted}.

      To get the corresponding result for the $k$th derivative we follow largely the same argument. The last estimate is then given by
    \begin{align*}
	\left\Vert (Pg)^{(k)}-g^{(k)} \right\Vert
	&\leq \left\Vert \left(P(g-p)\right)^{(k)} \right\Vert + \left\Vert p^{(k)}-g^{(k)} \right\Vert + \left\Vert (Pp)^{(k)}-p^{(k)} \right\Vert \\
	&\leq Ch^{-k}\left\Vert P(g-p)\right\Vert +C \dx^{\order-k+1} \left\Vert p^{(\order+1)}\right\Vert \\
	&\leq C \dx^{\order-k+1},
    \end{align*}
    where the estimate for the first term follows by the well-known Markov inequality (see e.g. \cite{shadrin2004twelve}).
    \end{proof}
       
      Let us discuss the principle of applying Theorem \ref{thm:discont_approx}. First the operator $P$ is applied to $f(j\dt)$, i.e.~a point on the exact solution, and we can assume the necessary regularity to apply Theorem \ref{thm:jackson}. Consequently, we get a jump discontinuity of heights at most
      \[
	\vert \varepsilon^{(k)} \vert \leq 2 \left\Vert f^{(k)}(j\dt)-Pf^{(k)}(j\dt)\right\Vert \leq C\dx^{\order-k+1}\left\Vert f^{(\order+1)}(j\dt)\right\Vert \leq C \dx^{\order-k+1}, \quad 0\leq k \leq \order.
      \]
      Now the projected function is translated by a splitting operator (the example $g(x)=(Pf(j\tau))(x-v\tau)$ is illustrated in Figure \ref{fig:discontinuous_projection}) and projected back on the finite dimensional subspace. The resulting error up to the $\order$-th derivative is then given by (see Theorem \ref{thm:discont_approx})
       \[
	\left\Vert g^{(k)} - (Pg)^{(k)} \right\Vert \leq C \dx^{\order - k + 1}.
      \]

      From this we can also follow that the new jump heights $\varepsilon_1^{(k)}$ are at most
      \[
	  \vert \varepsilon_1^{(k)} \vert \leq 2 \Vert (Pg)^{(k)}-g^{(k)}\Vert \leq C \dx^{\order-k+1}, \qquad 0 \leq k \leq \order.
      \]

       \begin{figure}[h]
  	\centering \includegraphics[width=6cm]{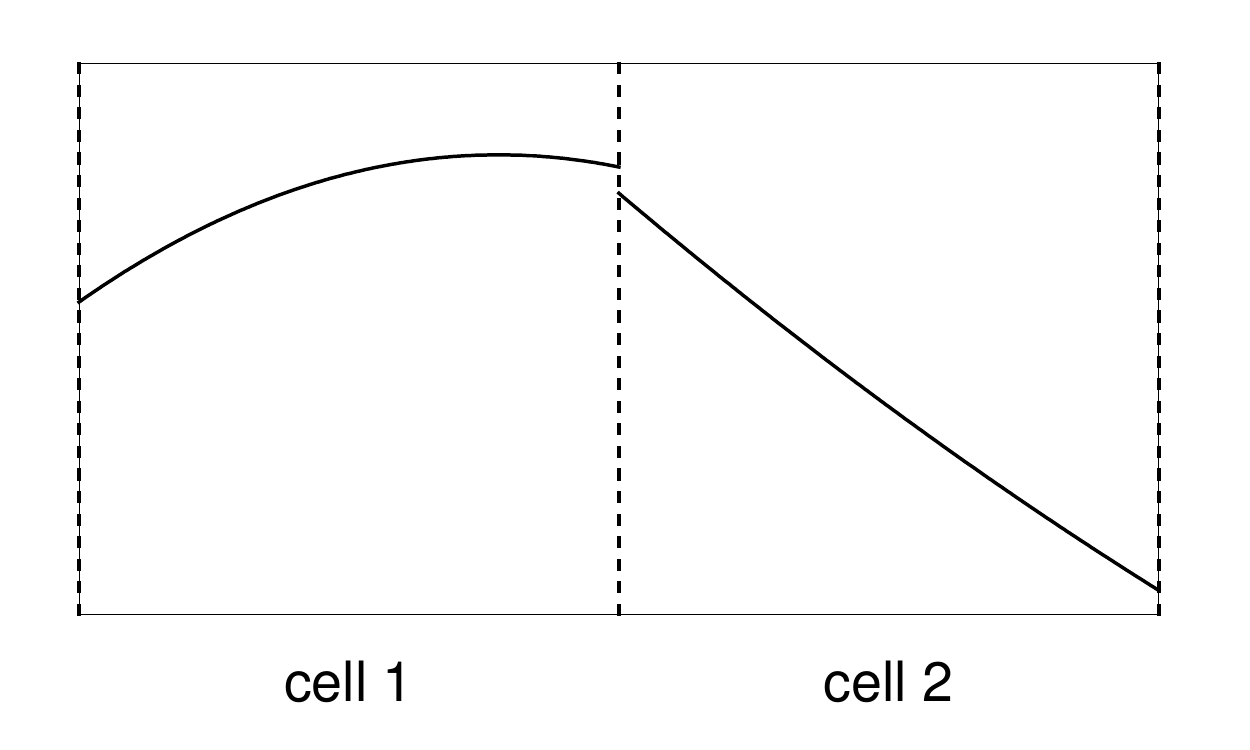}
 	\centering \includegraphics[width=6cm]{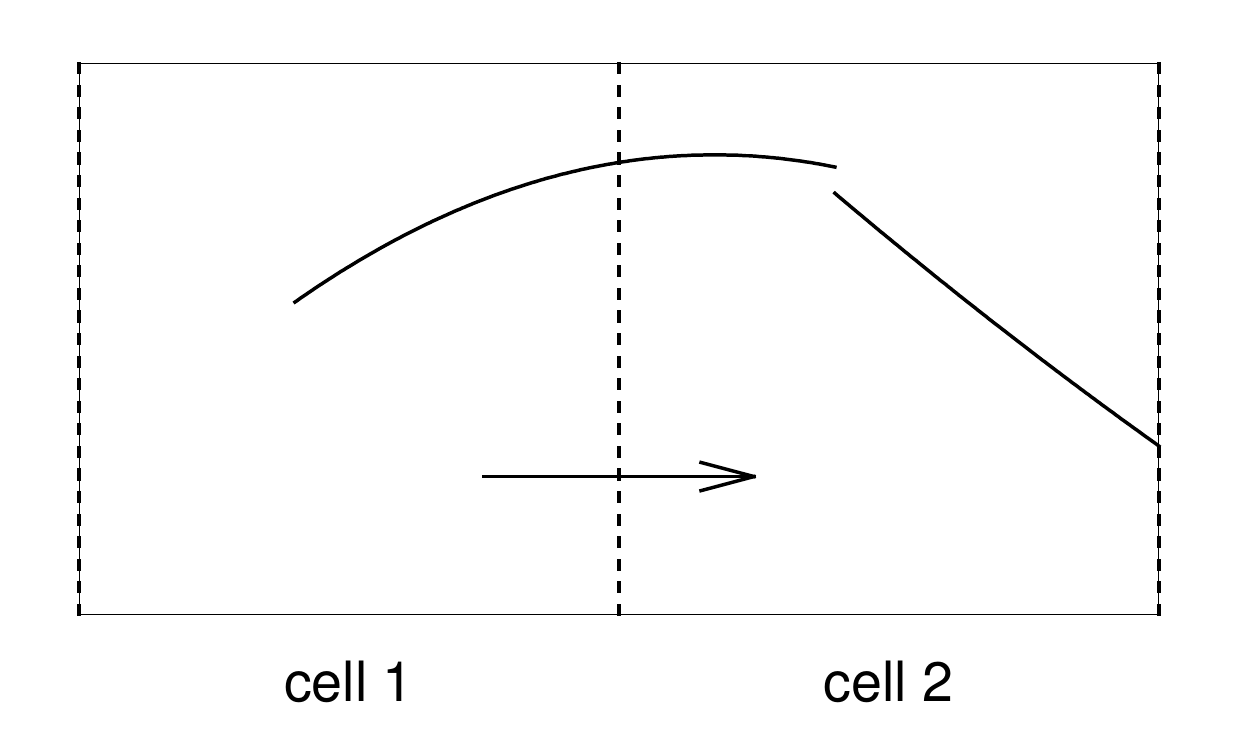}
 	\caption{Projected smooth function with a jump discontinuity at the cell boundary (left) and translation with a discontinuity inside the cell (right). Only two cells in the x direction are shown. \label{fig:discontinuous_projection}}
       \end{figure}

      Since we only have to repeat this procedure a finite number of times (i.e.~for a single step of the Strang splitting algorithm) and the assumptions of Theorem \ref{thm:discont_approx} are satisfied uniformly for all $f(t)$, we can find a uniform constant $C$ such that the desired estimate holds.

      Strictly speaking this argument is only valid for a constant advection (i.e. where $v$ is fixed). However, we can always decompose the projection operator as $P=P_v P_x$; that is, into a projection in the $x$-direction (that depends on the parameter $v$) and a subsequent projection in the $v$-direction. Due to the special form of the advection we consider (see section \ref{sec:translation_projection}), the projection in the $x$-direction gives   a function that is piecewise polynomial in every cell. Thus, the projection in the $v$-direction poses no difficulty.

    \subsection{Consistency} \label{sec:consistency}

      It is the purpose of this section to formulate assumptions under which we can show a consistency bound for the initial value problem given in equation \eqref{eq:vp11}. For notational simplicity, we will denote in this section the solution of \eqref{eq:vp11} at a fixed time $t_k=k \tau$ by $f_0$. The function $\tilde{f}_0$ defined as $P f_0$ is a (possible) initial value for a single time step (i.e., a single application of the splitting operator $S_k$ or $\tilde{S}_k$). Since we consider consistency we are interested in the non-linear operator $S$ that is given by
	  \[
	    S(\cdot) = S^{(A)} \e^{\dt B\left(S^{(A)}(\cdot)\right)} S^{(A)} (\cdot),
	  \]
	  and the corresponding spatially discretized operator
      \[
        \tilde{S}(\cdot) = \tilde{S}^{(A)} P\e^{\dt B\left(\tilde{S}^{(A)}(\cdot)\right)} \tilde{S}^{(A)} (\cdot).
	  \]

      Let us first give a simple consequence of the variation-of-constants formula.
      \begin{lemma} \label{lem:B_estimate} Suppose that $f_0$ is absolutely continuous and that $f_{1/2}, \tilde{f}_{1/2}$ are integrable. Then
	\[
	S^{(B(f_{1/2}))}f_0-S^{(B(\tilde{f}_{1/2}))}f_0 =
    \int_{0}^{\dt} \e^{(\dt-\sigma)B(\tilde{f}_{1/2})}\left(B(f_{1/2})-B(\tilde{f}_{1/2})\right) \e^{\sigma B(f_{1/2})}f_0 \, \mathrm{d}\sigma.
	\]
      \end{lemma}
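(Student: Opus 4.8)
The plan is to recognize this as the standard "variation-of-constants" (Duhamel) identity relating two linear semigroups whose generators differ, applied to the operators $B(f_{1/2})$ and $B(\tilde f_{1/2})$. Write $C = B(f_{1/2})$ and $D = B(\tilde f_{1/2})$; both are linear operators (of the advection type $-E(\cdot,x)\partial_v$ with a fixed, $v$-independent field) so each generates a one-parameter group of translations in the velocity variable, for which Corollary~\ref{col:regB} and the remark following Theorem~\ref{thm:regularity} supply the needed regularity and well-posedness. The identity to be shown is then
\[
  \e^{\dt C}f_0 - \e^{\dt D}f_0 = \int_0^{\dt} \e^{(\dt-\sigma)D}\,(C-D)\,\e^{\sigma C}f_0 \, \mathrm{d}\sigma .
\]

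First I would introduce, for fixed $\dt$, the auxiliary function $\varphi(\sigma) = \e^{(\dt-\sigma)D}\,\e^{\sigma C} f_0$ for $\sigma \in [0,\dt]$, and observe that $\varphi(0) = \e^{\dt D} f_0$ and $\varphi(\dt) = \e^{\dt C} f_0$, so that $\e^{\dt C}f_0 - \e^{\dt D}f_0 = \varphi(\dt) - \varphi(0) = \int_0^{\dt} \varphi'(\sigma)\,\mathrm{d}\sigma$. Next I would differentiate $\varphi$ with respect to $\sigma$ using the product rule for semigroups: the derivative of $\sigma \mapsto \e^{\sigma C} f_0$ is $C\,\e^{\sigma C} f_0$ (here one uses that $f_0$ is absolutely continuous, hence $\partial_v f_0$ makes sense as an integrable function and lies in a space on which $\e^{\sigma C}$ acts continuously), and the derivative of $\sigma \mapsto \e^{(\dt-\sigma)D}$ applied to a fixed vector is $-D\,\e^{(\dt-\sigma)D}$ applied to it. Combining, and using that $D$ commutes with its own semigroup,
\[
  \varphi'(\sigma) = \e^{(\dt-\sigma)D} C\,\e^{\sigma C} f_0 - \e^{(\dt-\sigma)D} D\,\e^{\sigma C} f_0 = \e^{(\dt-\sigma)D}\,(C-D)\,\e^{\sigma C}f_0 ,
\]
which is exactly the integrand. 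Integrating from $0$ to $\dt$ yields the claim.

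The main obstacle is not the formal computation but justifying the term-by-term differentiation and the existence of the integral in the presence of unbounded operators: one must check that $\sigma \mapsto \e^{\sigma C} f_0$ is differentiable into a space on which $C-D$ and the semigroup $\e^{(\dt-\sigma)D}$ act boundedly, and that the resulting integrand is continuous (hence Bochner-integrable) in $\sigma$. Here the concreteness of the 1+1-dimensional problem helps: $\e^{\sigma C}$ and $\e^{\sigma D}$ are explicit velocity translations $f(x,v) \mapsto f(x, v - \sigma E(\cdot,x))$, the difference $C - D = -\bigl(E(f_{1/2},\cdot) - E(\tilde f_{1/2},\cdot)\bigr)\partial_v$ is a first-order operator with a bounded (indeed smooth, by Corollary~\ref{col:regB}) coefficient, and the integrability hypotheses on $f_{1/2}, \tilde f_{1/2}$ together with the absolute continuity of $f_0$ guarantee that all the objects above are well defined and the map $\sigma \mapsto \e^{(\dt-\sigma)D}(C-D)\e^{\sigma C}f_0$ is continuous. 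One then invokes the fundamental theorem of calculus for Banach-space-valued functions to conclude. I would keep this part brief, since it is the routine justification underlying every application of the variation-of-constants formula, and the reader is referred to \cite{Jahnke00, time_analysis} for this style of argument.
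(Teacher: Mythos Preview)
Your proposal is correct and is essentially the paper's own argument: the paper writes $g(\sigma)=\e^{\sigma B(f_{1/2})}f_0$, observes $g'=B(\tilde f_{1/2})g+(B(f_{1/2})-B(\tilde f_{1/2}))g$, and then invokes the variation-of-constants formula as a black box, whereas you prove that formula directly by differentiating the interpolant $\varphi(\sigma)=\e^{(\dt-\sigma)D}\e^{\sigma C}f_0$. The mathematical content is identical; your version simply unpacks the Duhamel step and adds the regularity justification that the paper leaves implicit.
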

      \begin{proof}
	Let $g(\dt)=S^{(B(f_{1/2}))}f_{0}$. Then
	\[
	  g^{\prime}=B(f_{1/2})g=B(\tilde{f}_{1/2})g+\left(B(f_{1/2})-B(\tilde{f}_{1/2})\right)g,
	\]
	which can be rewritten by the variation-of-constants formulas as
	\[
    S^{(B(f_{1/2}))}f_{0}=S^{(B(\tilde{f}_{1/2}))}f_{0}+\int_{0}^{\dt}\e^{(\dt-\sigma)B(\tilde{f}_{1/2})} \left(B(f_{1/2})-B(\tilde{f}_{1/2})\right)g(\sigma)\,\mathrm{d}\sigma,
	\]
	from which the desired result follows immediately.
      \end{proof}

      The next two lemmas will be the crucial step to prove consistency. First, we consider the error made by the (exact) splitting operators due to the space discretization. Note that the assumptions are exactly the same as those needed for the development in section \ref{sec:jackson_for_discontinuous} to hold.

      \begin{lemma} \label{lem:consistency_1} Suppose that $f_0\in\mathcal{C}_{\mathrm{per,c}}^{\order+1}$. Then
	\[
	  \Vert Sf_{0}-S\tilde{f}_{0}\Vert \leq C \left( \dt \dx^{\order+1}+ \dx^{\order+1} \right),
	\]
	where $C$ depends on $\Vert f_0 \Vert_{\mathcal{C}_{\mathrm{per,c}}^{\order+1}}$ (but not on $\tau$ and $h$).
      \end{lemma}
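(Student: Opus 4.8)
The plan is to bound the difference $Sf_0 - S\tilde f_0$ by peeling off the three composed operators one at a time and tracking how the initial perturbation $f_0 - \tilde f_0 = f_0 - Pf_0$ propagates. Since $f_0 \in \mathcal{C}_{\mathrm{per,c}}^{\order+1}$, Theorem~\ref{thm:jackson} immediately gives $\Vert f_0 - Pf_0 \Vert \leq C\dx^{\order+1}$, and more generally control on derivatives up to order $\order$; this is the ``input error'' that must be shown not to blow up as it passes through $S$. I would write $Sf_0 - S\tilde f_0 = S^{(A)}\bigl(e^{\dt B(S^{(A)}f_0)} - e^{\dt B(S^{(A)}\tilde f_0)}\bigr)S^{(A)}f_0 + S^{(A)}e^{\dt B(S^{(A)}\tilde f_0)}S^{(A)}(f_0 - \tilde f_0)$, i.e.\ split into a term coming from the change in the nonlinear field argument and a term coming from the linear propagation of $f_0-\tilde f_0$.

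First I would handle the second, ``linear'' term. Because $S^{(A)} = e^{\frac{\dt}{2}A}$ is a pure transport (translation) in $x$, it is an isometry in $L^\infty$, and similarly $e^{\dt B(g)}$ is a translation in $v$, hence also an $L^\infty$-isometry; therefore this term is bounded by $\Vert f_0 - \tilde f_0\Vert \leq C\dx^{\order+1}$, giving the $\dx^{\order+1}$ contribution with no $\dt$ factor. For the first term I would apply Lemma~\ref{lem:B_estimate} (with $f_{1/2} = S^{(A)}f_0$ and $\tilde f_{1/2} = S^{(A)}\tilde f_0$), which expresses the difference as $\int_0^\dt e^{(\dt-\sigma)B(\tilde f_{1/2})}\bigl(B(f_{1/2}) - B(\tilde f_{1/2})\bigr)e^{\sigma B(f_{1/2})}f_0\,\mathrm{d}\sigma$. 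The integrand is bounded by $\Vert (B(f_{1/2}) - B(\tilde f_{1/2}))e^{\sigma B(f_{1/2})}f_0 \Vert$; since $B(g)h = -E(g,\cdot)\partial_v h$, this is $\Vert (E(f_{1/2},\cdot) - E(\tilde f_{1/2},\cdot))\,\partial_v(e^{\sigma B(f_{1/2})}f_0)\Vert \leq \Vert E(f_{1/2},\cdot) - E(\tilde f_{1/2},\cdot)\Vert \cdot \Vert \partial_v(e^{\sigma B(f_{1/2})}f_0)\Vert$. The second factor is bounded uniformly via the regularity of $f_0$ and the remark after Corollary~\ref{col:regB} (regularity is preserved under the flow of $B$), while the first factor is controlled by the explicit representation \eqref{eq:em_field}: $E$ depends on $f$ only through $\int_{\mathbb{R}} f\,\mathrm{d}v$ and is therefore Lipschitz, $\Vert E(f_{1/2},\cdot) - E(\tilde f_{1/2},\cdot)\Vert \leq C\Vert f_{1/2} - \tilde f_{1/2}\Vert_1 \leq C\Vert S^{(A)}(f_0 - \tilde f_0)\Vert_1 = C\Vert f_0 - Pf_0\Vert_1 \leq C\dx^{\order+1}$ (using that the supports are uniformly bounded by Theorem~\ref{thm:regularity}, so the $L^1$ norm over velocity is controlled by the $L^\infty$ norm). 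Integrating over $\sigma \in [0,\dt]$ yields the $\dt\,\dx^{\order+1}$ contribution.

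The main obstacle I anticipate is the bookkeeping around the electric-field difference: one must be careful that $f_{1/2} = S^{(A)}f_0$ and $\tilde f_{1/2} = \tilde S^{(A)}\tilde f_0 = PS^{(A)}\tilde f_0$ differ not only through the initial projection but also through the extra projection $P$ applied in $\tilde S^{(A)}$, so the difference $f_{1/2} - \tilde f_{1/2}$ must be estimated as $S^{(A)}f_0 - PS^{(A)}Pf_0$, which I would split as $(I-P)S^{(A)}f_0 + PS^{(A)}(f_0 - Pf_0)$ — the first piece is a projection error of the smooth-then-transported function (bounded by $C\dx^{\order+1}$ via Theorems~\ref{thm:jackson} and the piecewise-polynomial argument of section~\ref{sec:jackson_for_discontinuous} applied to $S^{(A)}f_0$ — actually $S^{(A)}f_0$ is still $\mathcal{C}^{\order+1}$, so plain Theorem~\ref{thm:jackson} suffices here), the second by boundedness of $P$ on $L^\infty$ and the input bound. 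The rest is routine: uniformity of all constants in $\dt$ and $\dx$ follows from Theorem~\ref{thm:regularity} and the remark after Corollary~\ref{col:regB}, which give uniform bounds on the relevant derivatives and supports over $t\in[0,T]$, so $C$ depends only on $\Vert f_0\Vert_{\mathcal{C}_{\mathrm{per,c}}^{\order+1}}$.
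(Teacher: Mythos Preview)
Your proof is correct and follows essentially the same route as the paper: the same two-term splitting, the same use of Lemma~\ref{lem:B_estimate} for the nonlinear term, and the same Lipschitz estimate for $E$ via \eqref{eq:em_field}. One clarification: your final paragraph worries about an issue that does not arise here. Since this lemma compares $Sf_0$ with $S\tilde f_0$ --- both built from the \emph{exact} operator $S$ --- the midpoint argument is $S^{(A)}\tilde f_0$ (what the paper calls $\hat f_{1/2}$), \emph{not} $\tilde S^{(A)}\tilde f_0 = PS^{(A)}\tilde f_0$; no extra projection appears, and $f_{1/2}-\hat f_{1/2}=S^{(A)}(f_0-\tilde f_0)$ is bounded directly by the isometry of $S^{(A)}$, exactly as you already did in your second paragraph. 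The extra projection you anticipate is precisely what distinguishes Lemma~\ref{lem:consistency_2} from this one.
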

      \begin{proof}
	Let us define $\hat{f}_{1/2} = S^{(A)}\tilde{f}_0$. Then, we can write
	\begin{align*}
	  \Vert Sf_{0}-S\tilde{f}_{0}\Vert
	  &=  \left\Vert S^{(A)}S^{(B(f_{1/2}))}S^{(A)}f_{0}-S^{(A)}S^{(B(\hat{f}_{1/2}))}S^{(A)}\tilde{f}_{0} \right\Vert \\
	  &\leq \left\Vert S^{(A)}\left(S^{(B(f_{1/2}))}-S^{(B(\hat{f}_{1/2}))}\right)S^{(A)}f_{0} \right\Vert + \left\Vert S^{(A)}S^{(B(\hat{f}_{1/2}))}S^{(A)}(f_{0}-\tilde{f}_{0}) \right\Vert \\
	  &\leq  \left\Vert S^{(A)}\left(S^{(B(f_{1/2}))}-S^{(B(\hat{f}_{1/2}))}\right)S^{(A)}f_{0} \right\Vert + C \Vert f_{0}-\tilde{f}_{0} \Vert.
	\end{align*}
	By using Lemma \ref{lem:B_estimate} and the definition of $B$ we get
	\begin{align*}
	  & \left\Vert S^{(A)}\left(S^{(B(f_{1/2}))}-S^{(B(\hat{f}_{1/2}))}\right)S^{(A)}f_{0} \right\Vert \\
	  &\qquad\qquad \leq C\dt\Vert E(f_{1/2})-E(\hat{f}_{1/2})\Vert\max_{\sigma\in[0,\dt]}\left\Vert\partial_{v}\left(\e^{\sigma B(f_{1/2})}S^{(A)}f_{0}\right)\right\Vert_{L^{\infty}}.
	\end{align*}
	Note that the set of measure zero, where $\partial_v$ is not defined, does not influence the estimate in the $L^\infty$ norm as we are only concerned with equivalence classes of (essentially) bounded functions.
	Finally, since $E$ is given by equation \eqref{eq:em_field} it follows that
	\[
	  \Vert E(f_{1/2})-E(\hat{f}_{1/2})\Vert \leq \Vert f_{1/2}-\hat{f}_{1/2} \Vert \leq \Vert f_0 - \tilde{f}_0 \Vert \leq C \dx ^{\order+1},
	\]
	which concludes the proof.
      \end{proof}

      Second, we consider the error made due to the approximation of the (exact) splitting operators. Note that the assumptions are exactly the same as those needed for the development in section \ref{sec:jackson_for_discontinuous}.

      \begin{lemma} \label{lem:consistency_2} Suppose that $f_0\in\mathcal{C}_{\mathrm{per,c}}^{\order+1}$. Then
	\[
	  \Vert S\tilde{f}_{0}-\tilde{S}\tilde{f}_{0}\Vert \leq C \left( \dt \dx^{\order+1}+ \dx^{\order+1} \right),
	\]
	where $C$ depends on $\Vert f_0 \Vert_{\mathcal{C}_{\mathrm{per,c}}^{\order+1}}$ (but not on $\tau$ and $h$).
      \end{lemma}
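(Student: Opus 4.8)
The plan is to follow the structure of the proof of Lemma~\ref{lem:consistency_1}; the new feature is that the three orthogonal projections occurring in $\tilde S$ now also contribute to the error, and I would remove them one at a time by a telescoping argument. Write $w_1 = S^{(A)}\tilde f_0$ (this is the argument of $B$ in the exact operator, i.e.~$\hat f_{1/2}$ in the notation of Lemma~\ref{lem:consistency_1}), $\tilde f_{1/2} = P w_1$, and $w_2 = \e^{\dt B(w_1)}w_1$, so that $S\tilde f_0 = S^{(A)}w_2$ and $\tilde S\tilde f_0 = PS^{(A)}\bigl(P\e^{\dt B(\tilde f_{1/2})}\tilde f_{1/2}\bigr)$. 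Inserting and subtracting the intermediate quantities $PS^{(A)}w_2$, $Pw_2$ and $\e^{\dt B(\tilde f_{1/2})}w_1$, and using that $P$ and $S^{(A)}$ are bounded on $L^\infty$ (the former by estimating \eqref{eq:projection_restricted}, the latter because it is an $L^\infty$-isometry), one is led to bound $\Vert S\tilde f_0 - \tilde S\tilde f_0\Vert$ by a constant times the sum of the three projection defects $\Vert S^{(A)}w_2 - PS^{(A)}w_2\Vert$, $\Vert w_2 - Pw_2\Vert$, $\Vert w_1 - Pw_1\Vert$ and of the flow defect $\bigl\Vert\bigl(\e^{\dt B(w_1)} - \e^{\dt B(\tilde f_{1/2})}\bigr)w_1\bigr\Vert$.

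For the projection defects I would argue as in Section~\ref{sec:jackson_for_discontinuous}: $\tilde f_0 = Pf_0$ is piecewise polynomial with jump heights bounded by $C\dx^{\order-k+1}$ in the $k$-th derivative (Theorem~\ref{thm:jackson} and the discussion after Theorem~\ref{thm:discont_approx}), and since $w_1$ and $w_2$ are obtained from $\tilde f_0$ by the exact translations $S^{(A)}$ and $\e^{\dt B(w_1)}$, the iterates remain piecewise polynomial with jump heights of the same order; Theorem~\ref{thm:discont_approx} (together with the decomposition $P = P_vP_x$ of Section~\ref{sec:translation_projection} to handle the $v$-dependent shear) then bounds each projection defect by $C\dx^{\order+1}$. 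For the flow defect I would invoke Lemma~\ref{lem:B_estimate} with $f_{1/2} = w_1$, $\tilde f_{1/2}$ and initial datum $w_1$, and use $B(g)h = -E(g,\cdot)\,\partial_v h$ together with the fact that the flows $\e^{\sigma B(\cdot)}$ are translations in velocity, to get
\[
  \bigl\Vert\bigl(\e^{\dt B(w_1)} - \e^{\dt B(\tilde f_{1/2})}\bigr)w_1\bigr\Vert
  \le \dt\,\bigl\Vert E(w_1,\cdot) - E(\tilde f_{1/2},\cdot)\bigr\Vert\,\max_{\sigma\in[0,\dt]}\bigl\Vert\partial_v\bigl(\e^{\sigma B(w_1)}w_1\bigr)\bigr\Vert .
\]
Here $\Vert E(w_1,\cdot) - E(\tilde f_{1/2},\cdot)\Vert\le\Vert w_1 - \tilde f_{1/2}\Vert = \Vert w_1 - Pw_1\Vert\le C\dx^{\order+1}$ by \eqref{eq:em_field}, and $\Vert\partial_v(\e^{\sigma B(w_1)}w_1)\Vert = \Vert\partial_v w_1\Vert\le\tfrac{\dt}{2}\Vert\partial_x Pf_0\Vert + \Vert\partial_v Pf_0\Vert\le C$, the last inequality using Theorem~\ref{thm:jackson} to bound the first partial derivatives of $Pf_0$ by $\Vert f_0\Vert_{\mathcal{C}_{\mathrm{per,c}}^{\order+1}}$ and $\dt\le T$; hence the flow defect is $\le C\dt\,\dx^{\order+1}$. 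Collecting these bounds gives $\Vert S\tilde f_0 - \tilde S\tilde f_0\Vert\le C(\dt\,\dx^{\order+1} + \dx^{\order+1})$ with $C$ depending only on $\Vert f_0\Vert_{\mathcal{C}_{\mathrm{per,c}}^{\order+1}}$ and the constants of Theorems~\ref{thm:jackson} and~\ref{thm:discont_approx}.

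The step I expect to be the main obstacle is the bookkeeping underlying the projection-defect estimates: one must verify that every intermediate function along the telescoped chain really is piecewise polynomial with jump heights of order $\dx^{\order-k+1}$ in its $k$-th derivative (in particular, that applying a second translation does not destroy this, possibly producing more than one discontinuity per cell, which requires the mild extension of Theorem~\ref{thm:discont_approx} to a bounded number of discontinuities), and that all of this is uniform, so that the constant may be taken independent of $\dt$, $\dx$ and of the time level $t_k\in[0,T]$ at which $f_0$ is evaluated --- the latter uniformity being guaranteed by Theorem~\ref{thm:regularity}. Everything else is a routine adaptation of the estimates in the proof of Lemma~\ref{lem:consistency_1}.
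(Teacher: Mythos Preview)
Your proposal is correct and follows essentially the same route as the paper: a telescoping decomposition isolating projection defects (bounded via Theorem~\ref{thm:discont_approx} and the discussion in Section~\ref{sec:jackson_for_discontinuous}) and a flow defect (bounded via Lemma~\ref{lem:B_estimate} and \eqref{eq:em_field}). Your telescoping is slightly finer---four terms instead of the paper's three---because you explicitly split off the middle projection defect $\Vert w_2 - Pw_2\Vert$ that the paper leaves implicit inside its combined term $\tilde S^{(A)}\bigl(S^{(B(\hat f_{1/2}))}-\tilde S^{(B(\tilde f_{1/2}))}\bigr)S^{(A)}\tilde f_0$; this makes your argument marginally cleaner but not genuinely different.
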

      \begin{proof}
	Let $\hat{f}_{1/2} = S^{(A)}\tilde{f}_0$. Then, we can write
	\begin{align*}
	  S\tilde{f}_{0}-\tilde{S}\tilde{f}_{0}
	  &= S^{(A)}S^{(B(\hat{f}_{1/2}))}S^{(A)}\tilde{f}_{0}-\tilde{S}^{(A)} \tilde{S}^{\left(B(\tilde{f}_{1/2})\right)}\tilde{S}^{(A)}\tilde{f}_{0} \\
	  &= \left(S^{(A)}-\tilde{S}^{(A)}\right)S^{(B(\hat{f}_{1/2}))}S^{(A)}\tilde{f}_{0} \\
	  &\qquad + \tilde{S}^{(A)}\left(S^{(B(\hat{f}_{1/2}))}-\tilde{S}^{(B(\tilde{f}_{1/2}))}\right) S^{(A)}\tilde{f}_{0} \\
	  &\qquad + \tilde{S}^{(A)}\tilde{S}^{(B(\tilde{f}_{1/2}))}\left(S^{(A)}-\tilde{S}^{(A)}\right)\tilde{f}_{0}.
	\end{align*}
	Now we estimate the three terms independently. The estimation of the first and third term is straightforward. We get
	\begin{align*}
	    \left\Vert \left(S^{(A)}-\tilde{S}^{(A)}\right)S^{(B(\hat{f}_{1/2}))}S^{(A)}\tilde{f}_{0} \right\Vert
	    &= \left\Vert (P-1)\left(S^{(A)}S^{(B(\hat{f}_{1/2}))}S^{(A)}\tilde{f}_{0}\right) \right\Vert \\
	    &\leq C\dx^{\order+1}
	\end{align*}
	and
	\begin{align*}
	    \left\Vert \tilde{S}^{(A)}\tilde{S}^{(B(\tilde{f}_{1/2}))}\left(S^{(A)}-\tilde{S}^{(A)}\right)\tilde{f}_{0} \right\Vert
	    &\leq C \left\Vert \left(S^{(A)}-\tilde{S}^{(A)}\right)\tilde{f}_{0} \right\Vert \\
	    &= C \left\Vert (1-P)S^{(A)}\tilde{f}_{0} \right\Vert \\
	    &\leq C \dx^{\order+1}.
	\end{align*}
	To estimate the second term we employ Lemma~\ref{lem:B_estimate} which gives
	\begin{align*}
	  & \left\Vert \tilde{S}^{(A)}\left(S^{(B(\hat{f}_{1/2}))}-\tilde{S}^{(B(\tilde{f}_{1/2}))}\right)S^{(A)}\tilde{f}_{0} \right\Vert \\
	  &\leq C\tau\Vert E(\hat{f}_{1/2})-E(\tilde{f}_{1/2})\Vert\max_{\sigma\in[0,\dt]}\left\Vert\partial_{v}\left(\e^{\sigma B(\hat{f}_{1/2})}S^{(A)}\tilde{f}_{0}\right)\right\Vert_{L^{\infty}}.
	\end{align*}
	Note that the set of measure zero, where $\partial_v$ is not defined, does not influence the estimate in the $L^\infty$ norm as we are only concerned with equivalence classes of (essentially) bounded functions. As in the last lemma $E$ is given by equation \eqref{eq:em_field} and thus it follows that
	\[
	  \Vert E(\hat{f}_{1/2})-E(\tilde{f}_{1/2})\Vert \leq \Vert \hat{f}_{1/2}-\tilde{f}_{1/2} \Vert = \Vert (1-P)\hat{f}_{1/2} \Vert \leq C \dx^{\order+1},
	\]
	which concludes the proof.
      \end{proof}

      \begin{theorem} [{\rm Consistency}] \label{thm:consistency}  Suppose that $f_0\in\mathcal{C}_{\mathrm{per,c}}^{\mathrm{max}(\order+1,3)}$. Then
	\[
	    \Vert Pf(h)-\tilde{S}\tilde{f}_{0} \Vert \leq C \left( \dt^3 + \dt \dx^{\order+1} + \dx^{\order+1} \right),
	\]
	where $C$ depends on $\Vert f_0 \Vert_{\mathcal{C}_{\mathrm{per,c}}^{\mathrm{max}(\order+1,3)}}$ (but not on $\tau$ and $h$).
      \end{theorem}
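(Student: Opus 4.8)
The plan is to split the consistency error into a purely temporal part and a spatial part, and to handle each with tools already in place. Write
\[
  Pf(h)-\tilde{S}\tilde{f}_{0}
  = P\bigl(f(h)-Sf_{0}\bigr)
  + \bigl(PSf_{0}-PS\tilde{f}_{0}\bigr)
  + \bigl(PS\tilde{f}_{0}-\tilde{S}\tilde{f}_{0}\bigr)
  + \bigl(S\tilde{f}_0 - P S\tilde{f}_0\bigr),
\]
where I have inserted $\pm PS\tilde f_0$ and $\pm S\tilde f_0$; after regrouping this is really just the three natural contributions (time-splitting error of the exact scheme, the error $Sf_0-S\tilde f_0$ from replacing $f_0$ by its projection, and the error $S\tilde f_0-\tilde S\tilde f_0$ from replacing exact flows by projected flows), together with a harmless extra projection term. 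First I would bound $\Vert P(f(h)-Sf_0)\Vert \le \Vert f(h)-Sf_0\Vert$ using boundedness of $P$ in the infinity norm and invoke the classical second-order local error estimate for Strang splitting with the nonlinear midpoint choice $f_{1/2}=S^{(A)}f_0$: this is exactly the analysis of \cite{time_analysis} (via the variation-of-constants formula as in \cite{Jahnke00}), and it yields $\Vert f(h)-Sf_0\Vert \le C\dt^3$, where $C$ depends on $\Vert f_0\Vert_{\mathcal{C}^{3}_{\mathrm{per,c}}}$ — this is why the hypothesis requires regularity of order $\max(\order+1,3)$.

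The remaining two genuine contributions are handled by Lemma~\ref{lem:consistency_1} and Lemma~\ref{lem:consistency_2}, which give respectively $\Vert Sf_0-S\tilde f_0\Vert \le C(\dt\dx^{\order+1}+\dx^{\order+1})$ and $\Vert S\tilde f_0-\tilde S\tilde f_0\Vert \le C(\dt\dx^{\order+1}+\dx^{\order+1})$; applying $P$ to the first of these only improves the constant. The leftover projection term $\Vert (1-P)S\tilde f_0\Vert$ is $\le C\dx^{\order+1}$ by Theorem~\ref{thm:jackson} applied to the smooth-enough pieces, or more precisely by the piecewise-polynomial-with-small-jump estimate of Theorem~\ref{thm:discont_approx} together with the remark following it (the discussion around Figure~\ref{fig:discontinuous_projection}), since $S^{(A)}\tilde f_0$ is a translated projected polynomial and hence piecewise polynomial with jumps of height $O(\dx^{\order-k+1})$. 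Summing the four bounds gives
\[
  \Vert Pf(h)-\tilde{S}\tilde{f}_0\Vert \le C\bigl(\dt^3 + \dt\dx^{\order+1} + \dx^{\order+1}\bigr),
\]
with $C$ controlled by $\Vert f_0\Vert_{\mathcal{C}^{\max(\order+1,3)}_{\mathrm{per,c}}}$, exactly as asserted. Here I would use Theorem~\ref{thm:regularity} to ensure that $f(t)$, $E_B(t,\cdot)$, and the intermediate splitting stages all lie in the appropriate $\mathcal{C}^{\order+1}_{\mathrm{per,c}}$ spaces with norms bounded uniformly on $[0,T]$, so that all constants are uniform in the time step index.

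The main obstacle is making sure the constants in Lemmas~\ref{lem:consistency_1} and~\ref{lem:consistency_2} and in the projection estimates are genuinely uniform over all stages and all time steps. Each application of $\tilde S^{(A)}$ produces a new piecewise-polynomial function whose jump heights must again satisfy the hypothesis $\vert\varepsilon^{(k)}\vert\le c\dx^{\order-k+1}$ of Theorem~\ref{thm:discont_approx}; verifying that the constant $c$ does not grow as one composes the (finitely many) operators inside a single step — and that it is bounded in terms of $\Vert f_0\Vert_{\mathcal{C}^{\order+1}_{\mathrm{per,c}}}$ along the whole exact trajectory — is the delicate bookkeeping point. This is precisely what the discussion at the end of section~\ref{sec:jackson_for_discontinuous} secures: the procedure is repeated only a bounded number of times per step and the hypotheses of Theorem~\ref{thm:discont_approx} hold uniformly for all $f(t)$, $t\in[0,T]$. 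A secondary technical nuisance is the $\partial_v$-factor appearing via Lemma~\ref{lem:B_estimate}: one must control $\max_{\sigma\in[0,\dt]}\Vert\partial_v(\e^{\sigma B(\cdot)}S^{(A)}(\cdot))\Vert_{L^\infty}$ uniformly, which follows from Theorem~\ref{thm:regularity} and the remark that its regularity conclusions extend to the flows generated by $B$ and $B(g)$.
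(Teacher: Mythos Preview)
Your approach is essentially the paper's: split the consistency error into the pure time-splitting error (bounded by $C\tau^3$ via the result of \cite{time_analysis}), a projection error of size $C h^{\ell+1}$, and the two spatial contributions handled by Lemmas~\ref{lem:consistency_1} and~\ref{lem:consistency_2}. The subsequent discussion of uniformity of constants via Theorem~\ref{thm:regularity} and the remarks following Theorem~\ref{thm:discont_approx} is also in line with the paper.

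There is, however, an algebraic slip in your displayed decomposition: the four terms you wrote sum to
\[
  Pf(h)-\tilde{S}\tilde f_0 + (1-P)S\tilde f_0,
\]
not to $Pf(h)-\tilde{S}\tilde f_0$. The fourth term is spurious. The cleanest repair is the one the paper uses: insert $\pm Sf_0$ (rather than $\pm S\tilde f_0$) to obtain
\[
  Pf(h)-\tilde S\tilde f_0
  = P\bigl(f(h)-Sf_0\bigr) + (P-1)Sf_0 + \bigl(Sf_0-S\tilde f_0\bigr) + \bigl(S\tilde f_0-\tilde S\tilde f_0\bigr),
\]
so that the projection error lands on the \emph{smooth} function $Sf_0$ and Theorem~\ref{thm:jackson} applies directly, avoiding any appeal to Theorem~\ref{thm:discont_approx} for that term. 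Alternatively, since $\tilde S\tilde f_0$ already lies in the range of $P$, you could simply drop your fourth term and write the third as $P(S\tilde f_0-\tilde S\tilde f_0)$. Either way the fix is trivial and your argument goes through.
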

      \begin{proof}
	We write
	\begin{align*}
	  \Vert Pf(h)-\tilde{S}\tilde{f}_{0}\Vert
	  &=\Vert Pf(h)-PSf_{0}+PSf_{0}-Sf_{0}+Sf_{0}-S\tilde{f}_{0}+S\tilde{f}_{0}-\tilde{S}\tilde{f}_{0}\Vert \\
	  &\leq   \Vert P\left(f(h)-Sf_{0}\right)\Vert+\Vert PSf_{0}-Sf_{0}\Vert+\Vert Sf_{0}-S\tilde{f}_{0}\Vert+\Vert S\tilde{f}_{0}-\tilde{S}\tilde{f}_{0}\Vert \\
	  &\leq C\dt^{3}+C\dx^{\order+1}+\Vert Sf_{0}-S\tilde{f}_{0}\Vert+\Vert S\tilde{f}_{0}-\tilde{S}\tilde{f}_{0}\Vert,
	\end{align*}
    where the first term was bounded by Theorem 4.9 in \cite{time_analysis}. The two remaining terms can be bounded by Lemmas \ref{lem:consistency_1} and \ref{lem:consistency_2} to give the desired estimate.
      \end{proof}

    \subsection{Convergence} \label{sec:convergence}

    To show consistency it was most convenient to bound all terms in the infinity norm. Bounds in the $L^{1}$ or $L^{2}$ norms then follow since we consider a compact domain in space and velocity. However, for stability (and thus convergence) we need to bound the operator norm of the projection operator $P$ by $1$. Since such a bound is readily available in the $L^2$ norm  (as an orthogonal projection is always non-expansive in the corresponding norm) we will use it to show convergence. Note that this is not a peculiarity of our discontinuous Galerkin scheme. For example, in \cite{BesseMehr:2008} stability for two schemes based respectively on spline and Hermite interpolation is shown in the $L^2$ norm only.

    \begin{theorem} [{\rm Convergence}] \label{thm:convergence} For the numerical solution of \eqref{eq:vp11} we employ the scheme \eqref{eq:full-discr}. Suppose that the initial value $f_0\in\mathcal{C}^{\max\{\order+1,3\}}$ is non-negative and compactly supported in velocity. Then, the global error satisfies the bound
    \[
      \sup_{0 \leq n \leq N}  \left\Vert \left(\prod_{k=0}^{n-1}\tilde{S}_{k}\right)\tilde{f}_{0}-f(n\dt) \right\Vert_2
      \leq  C \left( \dt^2 + \frac{\dx^{\order+1}}{\dt} + \dx^{\order+1} \right),
    \]
    where $C$ depends on $T$ but is independent of $\tau,h,n$ for $0 \leq n \tau \leq N \tau = T$.
    \end{theorem}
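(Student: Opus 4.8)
The plan is to run the classical Lady Windermere's fan argument, decomposing the global error into a telescoping sum of propagated local errors. Writing $e_n = \bigl(\prod_{k=0}^{n-1}\tilde{S}_k\bigr)\tilde{f}_0 - f(n\dt)$, I would insert the exact (spatially continuous) Strang flow as an intermediate quantity and split $e_n$ into the sum over $j$ of terms of the form $\bigl(\prod_{k=j+1}^{n-1}\tilde{S}_k\bigr)\bigl(\tilde{S}_j P f(j\dt) - P f((j{+}1)\dt)\bigr)$, plus the contribution $\bigl(\prod_k\tilde{S}_k\bigr)(Pf_0 - f_0)$ from the initial projection and the term $Pf(n\dt) - f(n\dt)$ coming from comparing against $P f$ rather than $f$. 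The last two are $\mathcal{O}(\dx^{\order+1})$ by Theorem~\ref{thm:jackson} applied on each cell (using the regularity of $f$ from Theorem~\ref{thm:regularity}), and each summand in the telescoping sum is exactly a local error of the type bounded by the Consistency Theorem~\ref{thm:consistency}, namely $\mathcal{O}(\dt^3 + \dt\dx^{\order+1} + \dx^{\order+1})$, since $Pf(j\dt)$ plays the role of $\tilde f_0$ there.

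The second ingredient is stability: I need the discrete evolution operators to be (almost) non-expansive so that the propagated local errors do not blow up. Here I would switch to the $L^2$ norm, as the excerpt signals. The operator $\tilde{S}_k = \tilde{S}^{(A)}\tilde{S}^{(B)}_k\tilde{S}^{(A)}$ is a composition of exact advection solves (which are isometries in $L^2$, being measure-preserving transport along characteristics with divergence-free velocity fields in each direction) and orthogonal projections $P$ (which are non-expansive in $L^2$). Hence $\|\tilde{S}_k\|_{L^2\to L^2}\le 1$, and the product $\prod_{k=j+1}^{n-1}\tilde{S}_k$ that propagates the $j$-th local error is non-expansive. One subtlety is that $\tilde{S}_k$ depends on the computed solution $\tilde f_k$ through the electric field $E(\tilde f_{k+1/2})$; but since the $\tilde S^{(B)}$ step is still a pure velocity-translation composed with a projection, it remains non-expansive in $L^2$ regardless of which field it uses, so the nonlinearity causes no trouble for the stability bound itself.

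Combining the two, the telescoping sum over at most $N = T/\dt$ terms of size $\mathcal{O}(\dt^3 + \dt\dx^{\order+1} + \dx^{\order+1})$ yields $\mathcal{O}(\dt^2 + \dx^{\order+1} + \dx^{\order+1}/\dt)$, and adding the $\mathcal{O}(\dx^{\order+1})$ contributions from the initial projection and the final comparison term gives exactly the claimed bound, with a constant depending on $T$ (through the number of steps and through the regularity constants, which are uniform on $[0,T]$ by Theorem~\ref{thm:regularity}). One must also check that the consistency estimate, which is stated in the infinity norm, can be downgraded to the $L^2$ norm — this is immediate since the spatial domain $[0,L]\times[-v_{\max},v_{\max}]$ is compact, so $\|\cdot\|_2 \le C\|\cdot\|_\infty$.

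I expect the main obstacle to be a bookkeeping issue rather than a conceptual one: verifying that the hypotheses of Theorem~\ref{thm:consistency} are genuinely met at every step $j$ \emph{uniformly} in $j$. This requires that the exact solution $f(j\dt)$ lies in $\mathcal{C}_{\mathrm{per,c}}^{\max\{\order+1,3\}}$ with a norm bounded independently of $j$, which Theorem~\ref{thm:regularity} supplies on $[0,T]$, and — more delicately — that the \emph{computed} intermediate quantities entering the electric-field evaluation remain controlled, so that the constants in Lemmas~\ref{lem:consistency_1} and~\ref{lem:consistency_2} (which depend on $\|f_0\|_{\mathcal{C}^{\max\{\order+1,3\}}}$, i.e. on the exact local initial datum $f(j\dt)$, not on $\tilde f_j$) are indeed uniform. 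Since those lemmas are stated with the relevant constant depending only on the exact datum, this works, but the argument should make explicit that $Pf(j\dt)$ — not $\tilde f_j$ — is the object to which Theorem~\ref{thm:consistency} is applied in the telescoping identity, so that no Gronwall-type feedback through the nonlinear field is needed beyond the bare non-expansivity used for stability.
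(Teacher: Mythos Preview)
Your telescoping/stability strategy is the same as the paper's, and your observation that each factor $\tilde S^{(A)}$, $P$, $\e^{\tau B(\cdot)}$ is an $L^2$ contraction is exactly the stability input used there. However, your final paragraph contains a real gap: you \emph{do} need a Gronwall step, and your assertion that ``no Gronwall-type feedback through the nonlinear field is needed'' is where the argument fails.

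The issue is the mismatch between the operator that appears in the telescoping sum and the operator bounded by Theorem~\ref{thm:consistency}. In your decomposition the $j$th local error is
\[
\tilde S_j\, Pf(j\dt) - Pf((j{+}1)\dt),
\]
where $\tilde S_j$ uses the field $E(\tilde f_{j+1/2})$ computed from the \emph{numerical} iterate $\tilde f_j$. Theorem~\ref{thm:consistency}, on the other hand, bounds $\tilde S\bigl(Pf(j\dt)\bigr) - Pf((j{+}1)\dt)$, where the nonlinear operator $\tilde S$ evaluates the field at $\tilde S^{(A)}Pf(j\dt)$, i.e.\ from the \emph{exact} (projected) datum. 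These two operators do not coincide, and their difference, via Lemma~\ref{lem:B_estimate} and the representation \eqref{eq:em_field}, is controlled by
\[
C\dt\,\bigl\Vert E(\tilde f_{j+1/2}) - E(\tilde S^{(A)}Pf(j\dt))\bigr\Vert
\;\le\; C\dt\,\bigl\Vert \tilde f_j - Pf(j\dt)\bigr\Vert_2
\;\le\; C\dt\bigl(e_j + C\dx^{\ell+1}\bigr).
\]
This extra $C\dt\,e_j$ cannot be absorbed by the bare non-expansivity of $\prod_k\tilde S_k$; it forces the recursion $e_{j+1}\le(1+C\dt)e_j + C(\dt^3+\dt\dx^{\ell+1}+\dx^{\ell+1})$ and hence a discrete Gronwall argument, exactly as the paper carries out. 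Once you add this term, your proof and the paper's coincide.
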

    \begin{proof}
		From \eqref{eq:full-discr} we get
		\[
		  \tilde{f}_{n+1/2}=\tilde{S}^{(A)}\left(\prod_{m=0}^{n-1}\tilde{S}^{(A)}P\e^{\tau B(\tilde{f}_{m+1/2})}\tilde{S}^{(A)}\right)\tilde{f}_{0}.
		\]
		Now we can derive a recursion for the error in the $L^2$ norm
		\begin{align*}
			e_{n+1}&=\Vert\tilde{f}_{n+1}-f(n\tau+\tau)\Vert_{2}\\
            &=\Vert\tilde{S}^{(A)}P\e^{\tau B(\tilde{f}_{n+1/2})}\tilde{S}^{(A)}\tilde{f}_{n}-f(n\tau+\tau)\Vert_{2}\\
            &\leq\Vert\tilde{S}^{(A)}P\e^{\tau B(\tilde{f}_{n+1/2})}\tilde{S}^{(A)}\tilde{f}_{n}-\tilde{S}^{(A)}P\e^{\tau B(P\e^{\frac{\tau}{2}A}Pf(n\tau))}\tilde{S}^{(A)}\tilde{f}_{n}\Vert_{2}\\
            &\qquad +\Vert\tilde{S}^{(A)}P\e^{\tau B(P\e^{\frac{\tau}{2}A}Pf(n\tau))}\tilde{S}^{(A)} ( \tilde{f}_{n}-f(n\tau)) \Vert_{2}\\
            &\qquad+\Vert\tilde{S}^{(A)}P\e^{\tau B(P\e^{\frac{\tau}{2}A}Pf(n\tau))}\tilde{S}^{(A)}(1-P)f(n\tau)\Vert_{2}\\
            &\qquad+\Vert\tilde{S}^{(A)}P\e^{\tau B(P\e^{\frac{\tau}{2}A}Pf(n\tau))}\tilde{S}^{(A)}Pf(n\tau)-Pf(n\tau+\tau)\Vert_{2}\\
            &\qquad+\Vert(P-1)f(n\tau+\tau)\Vert_{2}.
        \end{align*}
        The first term is estimated with the help of Lemma~\ref{lem:B_estimate}, the fourth one with Theorem~\ref{thm:consistency}. This gives us
        \begin{align*}
            e_{n+1}&\leq C\tau\Vert\tilde{f}_{n+1/2}-P\e^{\frac{\tau}{2}A}Pf(n\tau)\Vert_{2}+ \Vert\tilde{f}_{n}-f(n\tau)\Vert_{2}+C\left(\tau^{3}+\tau h^{\ell+1}+h^{\ell+1}\right)\\
            &\leq (1+C\tau)e_{k}+C\left(\tau^{3}+\tau h^{\ell+1}+h^{\ell+1}\right).
		\end{align*}
		Applying a discrete Gronwall lemma to the above recursion then gives
		\begin{align*}
			e_{n+1}&\leq \e^{CT}e_{0}+C\left(\tau^{2}+h^{\ell+1}+\frac{h^{\ell+1}}{\tau}\right)\\&\leq C\left(\tau^{2}+h^{\ell+1}+\frac{h^{\ell+1}}{\tau}\right),
		\end{align*}
		which is the desired bound as the constant $C$ can be chosen uniformly in $[0,T]$. This follows from the regularity result (Theorem \ref{thm:regularity}) which gives us the desired bound for Theorem \ref{thm:consistency} if $f_0\in\mathcal{C}^{\max\{\order+1,3\}}$ is non-negative and compactly supported with respect to velocity.
      \end{proof}
  \subsection{Extension to higher dimensions}

    In three dimensions the splitting scheme is given by (for simplicity we consider a single time step only and thus drop the corresponding indices)
    \begin{align}
      S^{(A)}f(\boldsymbol{x},\boldsymbol{v}) &= f\left(\boldsymbol{x}-\frac{\dt}{2}\boldsymbol{v},\boldsymbol{v}\right), \label{eq:SA_3d} \\
      S^{(B)}f(\boldsymbol{x},\boldsymbol{v}) &= f(\boldsymbol{v},\boldsymbol{x}-\dt \boldsymbol{E}(f_{1/2},\boldsymbol{x})). \label{eq:SB_3d}
    \end{align}
    The expression in equation \eqref{eq:SA_3d} can be easily decomposed into three translation in a single dimension, i.e.
    \[
      S^{(A)} = \e^{\frac{\dt}{2} A_x} \e^{\frac{\dt}{2} A_y} \e^{\frac{\dt}{2} A_z}
    \]
    with $A_x = -v_x \partial_x$, $A_y = -v_y \partial_y$, and $A_z = -v_z \partial_z$.

    The discussion is more subtle for the expression in equation \eqref{eq:SB_3d}. In this case we can still use the decomposition given above; however, if we introduce a space discretization we will have to project back not onto a $1+1$ dimensional space but onto $1+3$ dimensional space. This is an important implementation detail; however, the convergence proof is (except for notational difficulties) unaffected.

    As most of the derivation in this paper is conducted within the framework of abstract operators, the extension to multiple dimensions is straightforward. In Lemmas \ref{lem:consistency_1} and \ref{lem:consistency_2} we have to consider a more general differentiation operator (i.e.\ a directional derivative). However, this represents no difficulty as the existence and regularity results are not restricted to the $1+1$ dimensional case (see \cite{glassey:1996}).

    Therefore, it remains to generalize the discussion given in \cite{time_analysis} to multiple dimensions in the case of the Vlasov--Poisson equation. The abstract results hold independently of the dimension and the specific details of the operators $A$ and $B$. The remaining computations are somewhat tedious, however, as the existence and regularity results are essentially the same the proof can be extended in a straightforward fashion.

  \section{Numerical simulations} \label{sec:numerical_simulation}

    The purpose of this section is to perform a number of numerical simulations in order to establish the validity of the implementation. The recurrence phenomenon in the context of higher order implementations in space is discussed in section \ref{sec:recurrence}. In section \ref{sec:order} the order of the method in the strong Landau damping problem is investigated. We will also reproduce some medium time integration results for linear Landau damping (section \ref{sec:landau_damping}) and investigate the stability for the Molenkamp--Crowley test problem (section \ref{sec:numerical_stability}).

    The computer program used for the numerical simulations performed in this section is implemented in {\tt C++}. It employs heavily the concept of templates and operator overloading to provide a compact implementation that is easily extendable to the multi dimensional case. As a result, the core program consists of only about 800 lines of source code (excluding unit tests but including all the logic needed to carry out the simulations in this section) while still maintaining an implementation with good performance.

    \subsection{Recurrence} \label{sec:recurrence}
      It is well known that piecewise constant approximations in velocity space lead to a recurrence phenomenon that is purely numerical in origin. This behavior has been investigated, for example, in \cite{Cheng:1976} and \cite{jenab:2011}. In \cite{zhou:2001} it is demonstrated by a number of numerical experiments that in the weak Landau damping problem this phenomenon is also purely a numerical artefact.

      From an analytical point of view the recurrence phenomenon is most easily understood for an advection equation, i.e.~a function $f(t,x,v)$ satisfying the equation
      \begin{equation}
	\partial_t f = -v \partial_x f. \label{eq:advection}
      \end{equation}
      For its numerical solution consider a piecewise constant approximation of $f$ in velocity space. This approximation results in slices in velocity space that correspond to the average velocity in a particular cell. Let us further assume that the velocity space $[-v_{\max},v_{\max}]$ consists of an odd number of cells and that the interval $[0,4 \pi]$ is employed in the space direction. Then the solution of \eqref{eq:advection} is a periodic function in time and the period $p$ is easily determined to be
      \[
	\dx_v p = 4 \pi.
      \]
      That this is only a numerical artefact is a simple consequence of the fact that $p$ tends to infinity as $\dx_v$ tends to $0$. However, for the purpose of this section it is instructive to compute the exact solution for the following initial value
      \[
	f_0(x,v) = \frac{\e^{-v^2/2}}{\sqrt{2 \pi}}\big(1+0.01 \cos(0.5 x)\big).
      \]
      The solution of \eqref{eq:advection} is then given by
      \[
	f(t,x,v) = \frac{\e^{-v^2/2}}{\sqrt{2 \pi}}\big(1+0.01 \cos(0.5 x - 0.5 v t)\big).
      \]
      This function, however, is not periodic in time (with a period being independent of $v$). To represent this more clearly, we compute the electric energy
      \begin{equation} \label{eq:electric_energy_advection}
	\mathcal{E}(t) = \int_0^{4 \pi} E(t,x)^2 \mathrm{d}x = \frac{\pi}{1250} \e^{-0.25 t^2},
      \end{equation}
      where the electric field $E(t,x)$ is determined as before by
      \[
      	E(t,x) = \int_0^L K(x,y) \left( \int_\mathbb{R} f(t,y,v)\mathrm{d}v - 1 \right) \mathrm{d}y.
      \]
      Note that the kernel $K(x,y)$ is defined in equation \eqref{eq:em_field}.
      Thus, the electric energy is exponentially decreasing for the exact solution (but periodic in time for the numerical solution). One might naively expect that this phenomenon vanishes as soon as one considers an approximation of degree at least $1$ in the velocity direction. While it is true that the solution is no longer periodic, as can be seen from Figure \ref{fig:recurrence_advection}, errors in the velocity approximation still result in a \emph{damped recurrence} of the electric field. Note that the size of this recurrence effect seems to be determined by the space discretization error.
      \begin{figure}[h]
 	\centering \includegraphics[width=12cm]{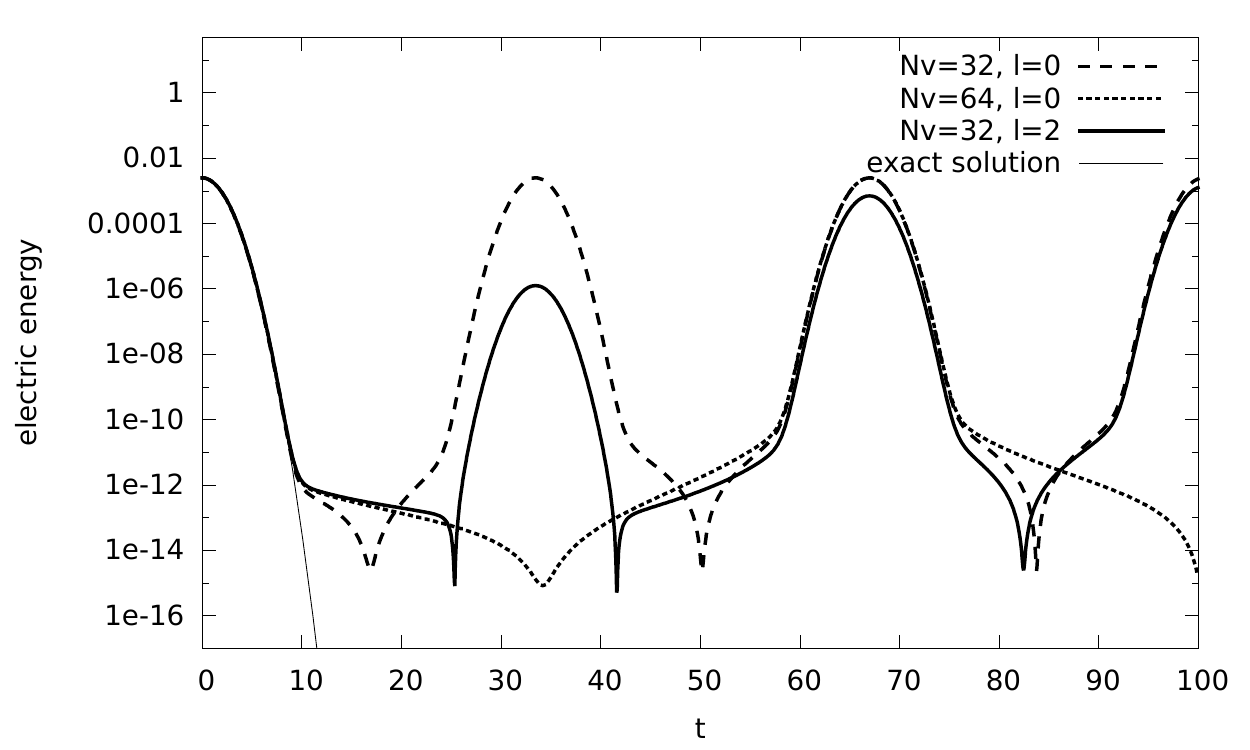}
	\centering \includegraphics[width=12cm]{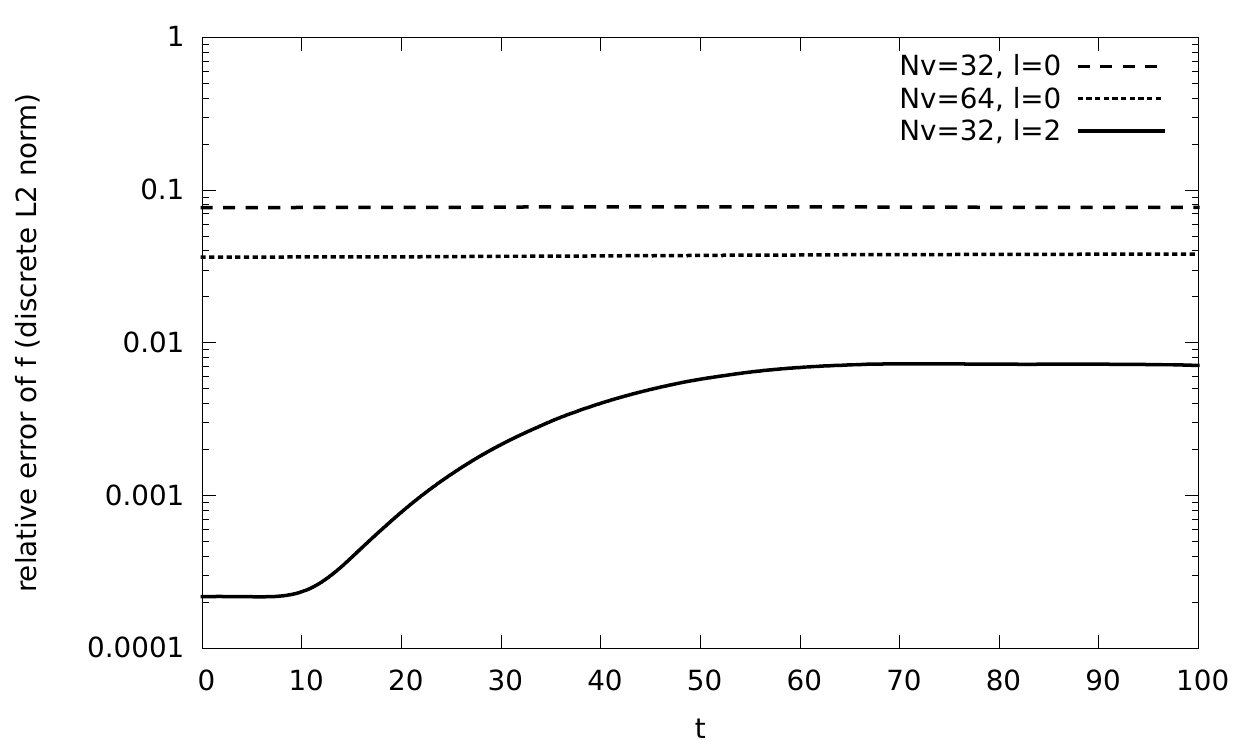}
	\caption{Recurrence phenomenon for the advection equation (top). Note that while there is no periodicity in the second order approximation a recurrence-like effect from the finite cell size is still visible. The (absolute) error as compared with the exact solution given in \eqref{eq:electric_energy_advection} in the discrete $L^2$ norm (bottom) behaves as expected. In all simulations $32$ cells and an approximation of order~$2$ (i.e. $\order=1$) have been employed in the space direction. The number of cells and the order of discretization in the velocity direction is indicated in the legend. In all computations $\dt = 0.05$ is used. \label{fig:recurrence_advection}}
      \end{figure}

      As mentioned before the recurrence phenomenon is also visible in the Landau damping problem. This is shown in Figure \ref{fig:recurrence_landau}.
      \begin{figure}[h]
	\centering \includegraphics[width=12cm]{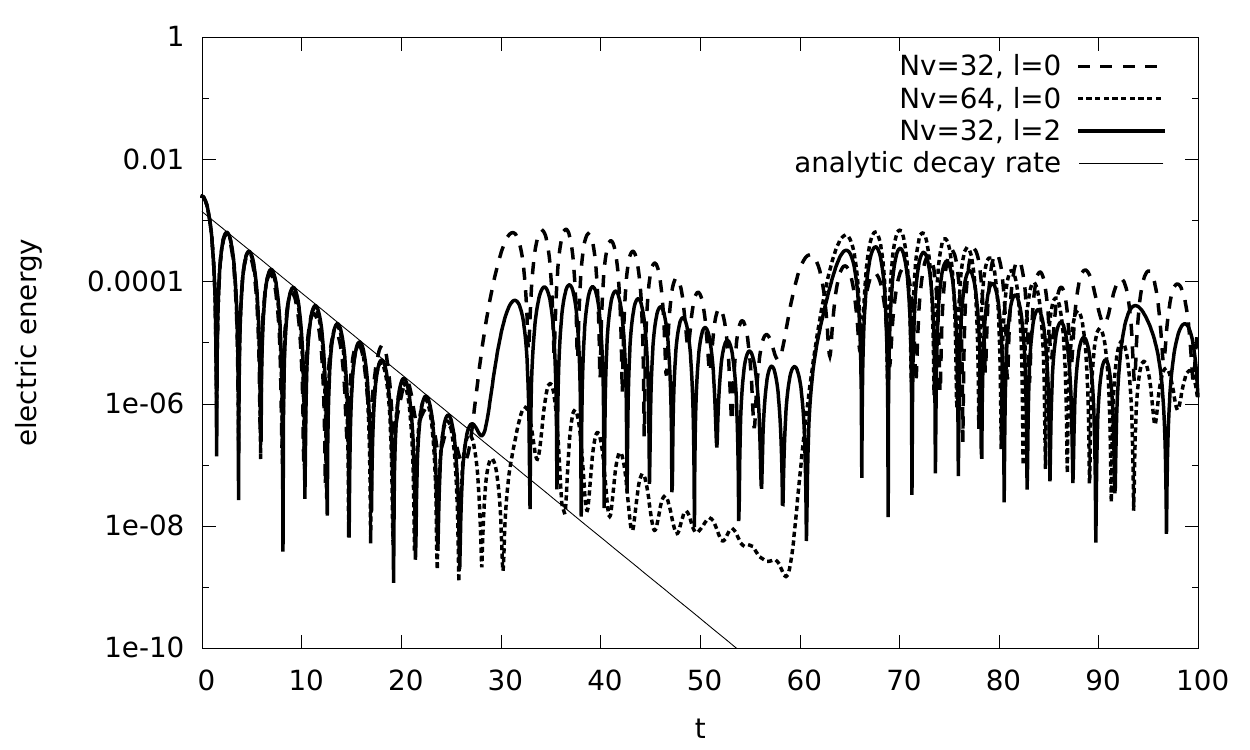}
	\caption{The recurrence phenomenon for Landau damping. Note that even though a higher order of approximation in the velocity direction improves the solution a recurrence-like effect is still visible. In all computations $\dt = 0.05$ is used. \label{fig:recurrence_landau}}
      \end{figure}

     \subsection{Order} \label{sec:order}
	We can investigate both, the order of convergence in time (i.e.~where the space error is small enough over the range of step sizes $\tau$ we are interested in) and the order of convergence in space (i.e.~where the step size is chosen small enough such that the time integration error is negligible). The order of the time integration has already been investigated in \cite{time_analysis}. Thus, we focus on the convergence order in space.
	
	Let us consider the Vlasov--Poisson equations in 1+1 dimensions together with the initial value
	\begin{equation*}
	  f_0(x,v)=\frac{1}{\sqrt{2 \pi}} \e^{-v^2/2} \big( 1+ \alpha\cos(0.5 x)\big).
	\end{equation*}
	This problem is called Landau damping. For $\alpha = 0.01$ the problem is called linear or weak Landau damping and for $\alpha=0.5$ it is referred to as strong or non-linear Landau damping. As can be seen, for example, in \cite{Crouseilles:2011,Filbet:2001} and \cite{Rossmanith:2011} Landau damping is a popular test problem for Vlasov codes.
	
	In our numerical simulations, all errors are computed with respect to a reference solution (such as to exclude unwanted effects from the time discretization). The reference solution uses $512$ cells with $\order=2$ and a time step of $\tau=0.1$. The results for strong Landau damping are given, up to order $3$, in Figure \ref{fig:convergence_space}. It can be seen that the accuracy improves with the desired order as the cell size decreases. Thus, the results are in good agreement with the theory developed in this paper.
	\begin{figure}[h]
	  \centering \includegraphics[width=12cm]{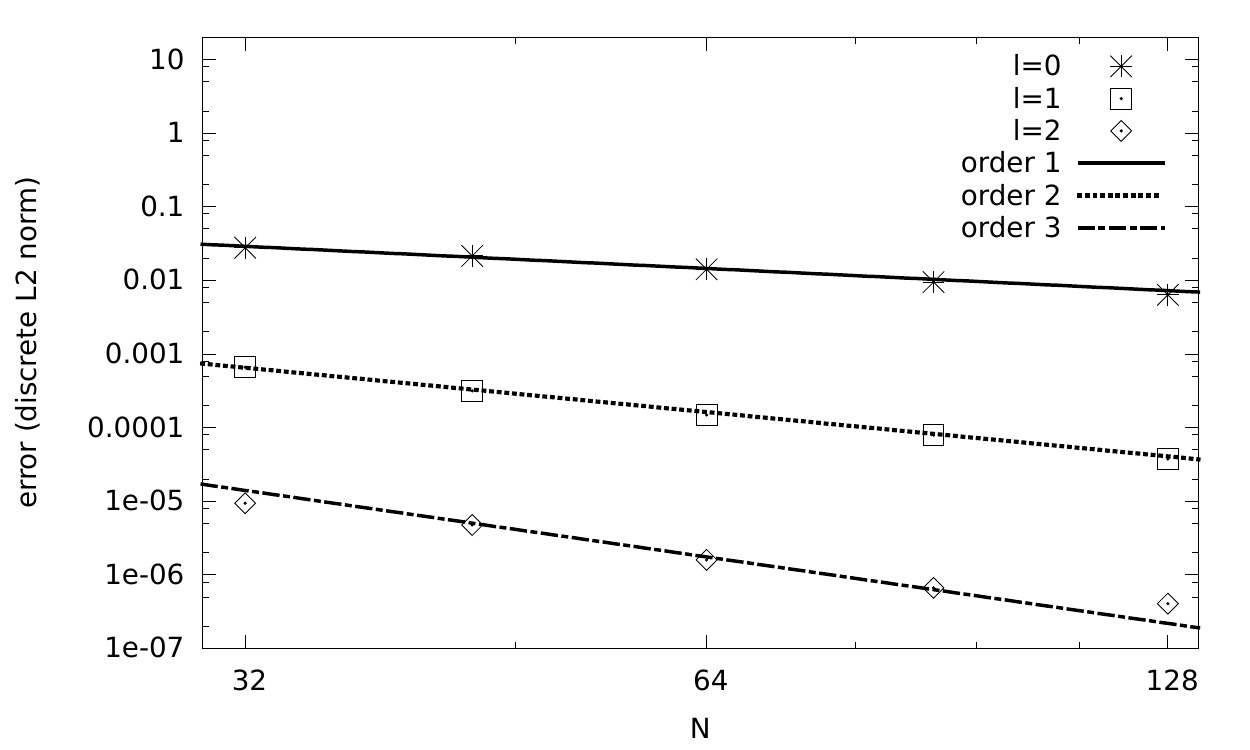}
	  \caption{Error of the particle density function $f(1,\cdot,\cdot)$ for non-linear Landau damping on the domain $[0,4 \pi]\times [-6,6]$ as a function of $N$, the number of cells in both the space and velocity direction. \label{fig:convergence_space}}
	\end{figure}

      \subsection{Landau damping} \label{sec:landau_damping}
	 The Landau damping problem has already been introduced in the previous section. In this section we are not interested in the desired order of the numerical algorithm but in the comparison with the exact solution of the Vlasov--Poisson equation. However, since an exact solution of the full Vlasov--Poisson equation is not known we will instead use a result that gives us the asymptotic decay rate $\gamma$ of the electric field in the case of weak Landau damping (see e.g. \cite{arber:2002}). Thus, we compare the decay of the energy stored in the electric field with
the graph of $\e^{-2 \gamma t}$, where $\gamma \approx 0.1533$. A number of such simulations have already been conducted (see e.g. \cite{zhou:2001}). However, due to the recurrence effect usually a large number of cells have to be employed in order to get accurate results for medium to long time intervals. For reference we note that \cite{zhou:2001} uses $N_x=N_v=1024$ whereas \cite{Heath:2011} uses up to $N_x=2000, N_v=1600$. The results of our simulation are shown in Figure \ref{fig:landau_damping} (the number of cells is $N_x=N_v=256$ for $\order=1$ and $N_x=N_v=128$ for $\order=2$). This experiment clearly shows that high-order approximations in space and velocity pay off.

	\begin{figure}
	  \centering \includegraphics[width=12cm]{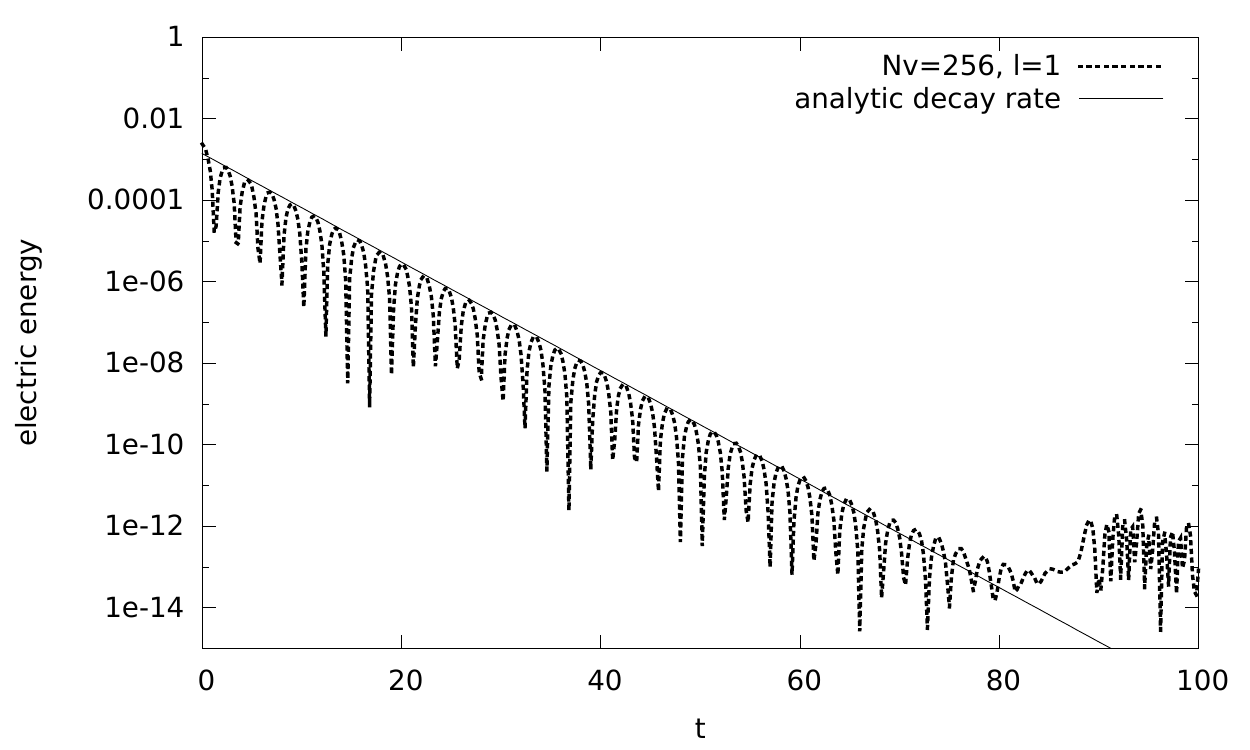}
	  \centering \includegraphics[width=12cm]{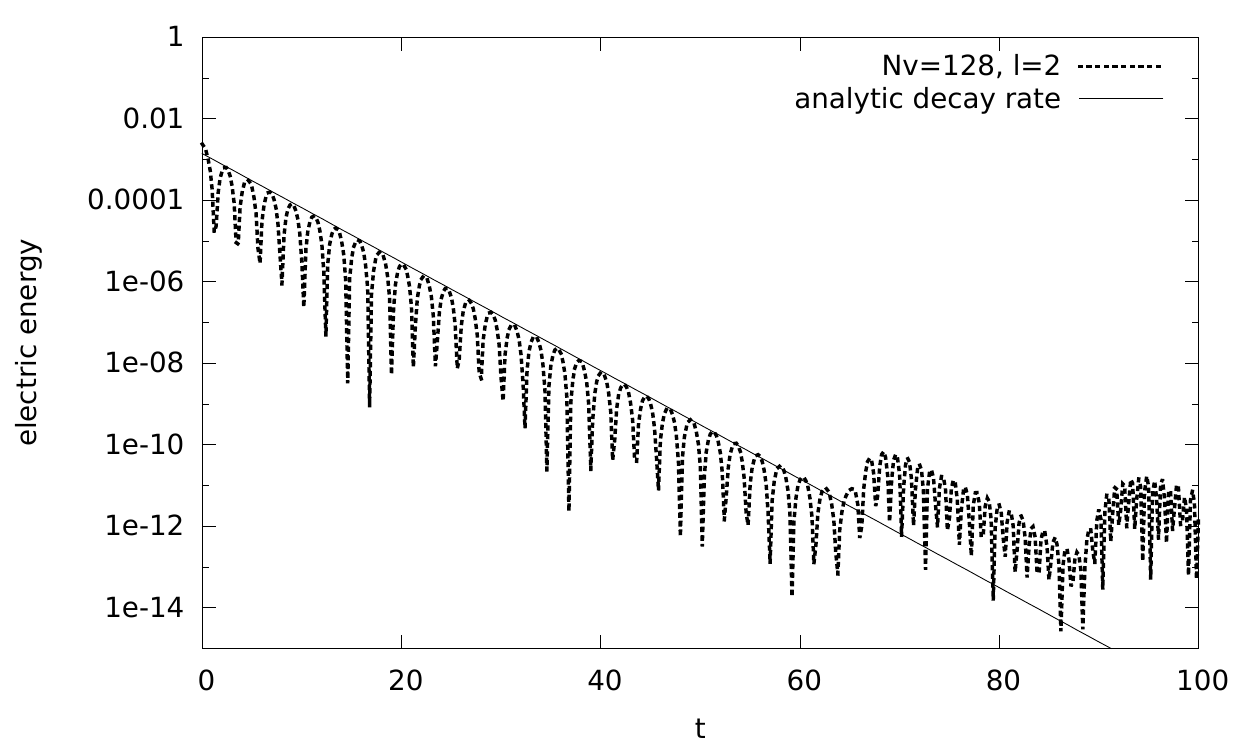}
	  \caption{ The decay of the electric field is shown for $N_x=N_v=256$, $\order=1$ (top) and $N_x=N_v=128$, $\order=2$ (bottom). In both cases a relatively large time step of $\tau=0.2$ is employed. \label{fig:landau_damping}}
	\end{figure}

      \subsection{Stability} \label{sec:numerical_stability}
	In advection dominated problems instabilities can occur if the numerical integration is not performed exactly. In \cite{morton:1988} this is shown for the Molenkamp--Crowley test problem, i.e.
	\begin{equation} \label{eq:cone_problem}
	  \left\{
	    \begin{aligned}
		      \partial_tf(t,x,y) &= 2 \pi (y \partial_x - x \partial_y ) f(t,x,y) \\
		      f(0,x,v)			&= f_0(x,y),
	    \end{aligned}
	  \right.
	\end{equation}
	where
	\[
	  f_0(x,y) = \begin{cases}
\cos^{2}(2\pi r) & r\leq\frac{1}{4}\\
0 & \text{else}
\end{cases}
	\]
	with $r^2=(x+\frac{1}{2})^2+y^2$. The solution travels along a circle with period $1$. We will solve the same problem using the algorithm presented in this paper and show that no instabilities occur if a quadrature rule of appropriate order is used. This results are given in Figure \ref{fig:numerical_stability}. Note that this is exactly what is expected based on the theoretical analysis done in section \ref{sec:vp11}. However, it is not true that such stability results hold for arbitrary schemes (see e.g. \cite{morton:1988}, where a finite element scheme of order $2$ is shown to be unstable for most quadrature rules).
	
	\begin{figure}
	  \centering {
	  	\includegraphics[width=14cm]{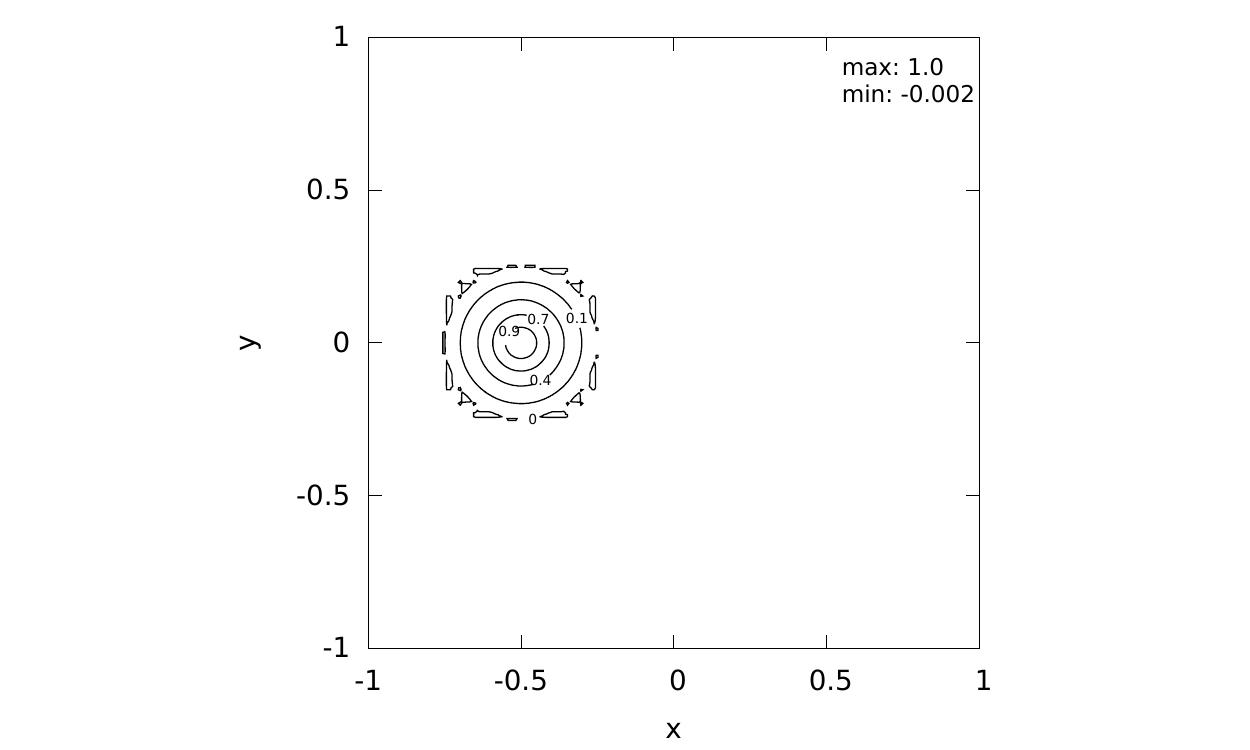}
	  	\includegraphics[width=14cm]{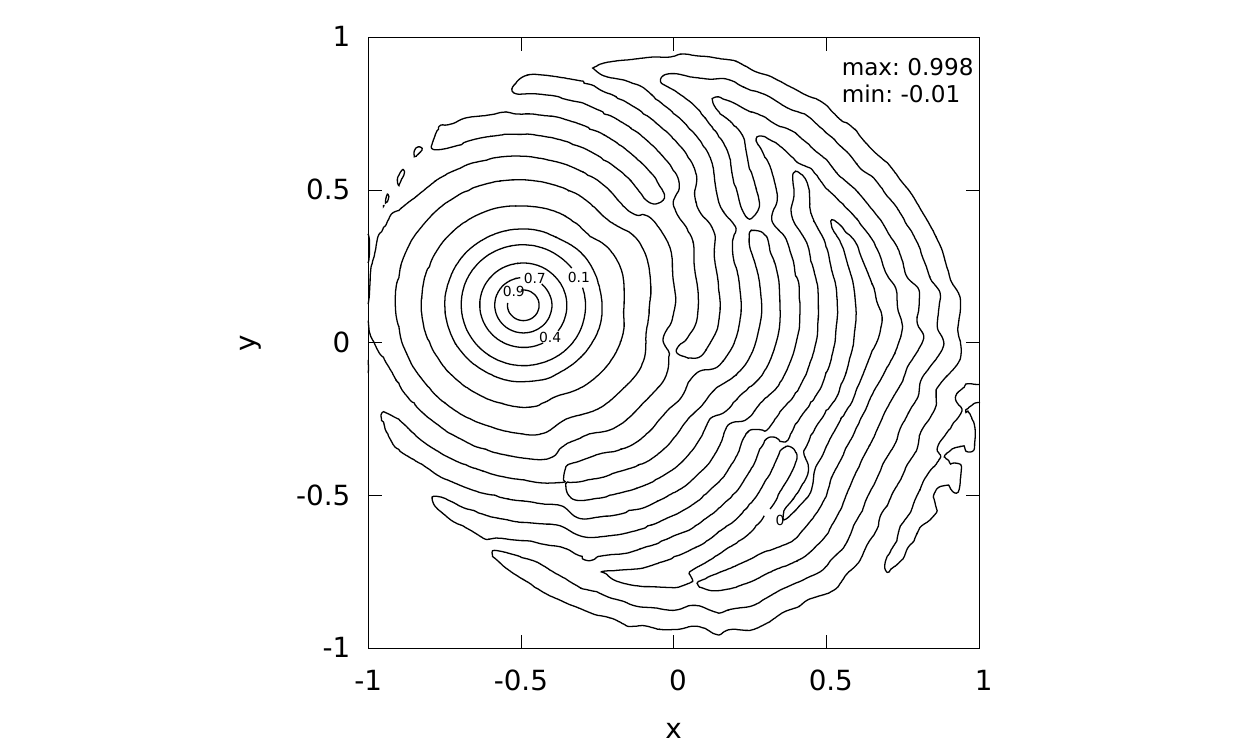}
	  }
	  \caption{Stability for the Molenkamp--Crowley test problem. The initial value is displayed at the top. The numerical solution after 60 revolutions with $\dt = 0.02$, $N_x=N_v=40$, and $\ell=2$ is shown at the bottom. As expected no numerical instabilities are observed. The negative values in the numerical solution are a consequence of the space discretization error and are propagated in space by the numerical algorithm (see the complex contour line of $0$ at the bottom). However, this fact has no influence on the stability of the scheme.\label{fig:numerical_stability}}
	\end{figure}

  \section{Conclusion}
  In the present paper we have extended the convergence analysis conducted in \cite{time_analysis} to the fully discretized case using a discontinuous Galerkin approximation in space. The results are only presented in case of the $1+1$ dimensional Vlasov--Poisson equation. However, we have given a short argument that the extension to multiple dimensions is although tedious in principle straightforward. In addition, we have presented a number of numerical simulations that investigate the behavior of the proposed algorithm. These simulations suggest that the algorithm has certain advantages over similar algorithms that employ a piecewise constant approximation in space and velocity.

  \bibliography{papers}
  \bibliographystyle{siam}

\end{document}